\newtheorem{theorem}{Theorem}[section]
\newtheorem{lemma}[theorem]{Lemma}
\newtheorem{proposition}[theorem]{Proposition}
\newtheorem{definition}[theorem]{Definition}
\newtheorem{remark}[theorem]{Remark}
\font\bigbf=cmbx10 scaled \magstep3
\begin{document}

\title{\bigbf  Internalizing sensing externality via matching and pricing for drive-by sensing taxi fleets}

\author{Binzhou Yang$^a$
\quad Ke Han$^{b,}\thanks{Corresponding author, e-mail: kehan@swjtu.edu.cn;}$
\quad Shenglin Liu$^a$
\quad Ruijie Li$^a$
\\\\
 $^a$\textit{\small School of Transportation and Logistics, Southwest Jiaotong University}
 \\
 $^b$\textit{\small School of Economics and Management, Southwest Jiaotong University}
}

\maketitle

\begin{abstract}
   Drive-by sensing is a promising data collection paradigm that leverages the mobilities of vehicles to survey urban environments at low costs, contributing to the positive externality of urban transport activities. Focusing on e-hailing services, this paper explores the sensing potential of taxi fleets, by designing a joint matching and pricing scheme based on a double auction process. The matching module maximizes the sensing utility by prioritizing trips with high sensing potentials, and the pricing module allocates the corresponding social welfare according to the participants' contributions to the sensing utility. We show that the proposed scheme is allocative efficient, individually rational, budget balancing, envy-free, and group incentive compatible. The last notion guarantees that the participants, as a cohort, will end up with the same total utility regardless of mis-reporting on part of its members. Extensive numerical tests based on a real-world scenario reveal that the sensing externality can be well aligned with the level of service and budget balance. Various managerial insights regarding the applicability and efficacy of the proposed scheme are generated through scenario-based sensitivity analyses.  
\end{abstract}

\noindent {\it Keywords: drive-by sensing; e-hailing service; matching and pricing; double auction; transport externality} 

\section{Introduction}\label{secIntro}

The uprise of cheap and mobile sensors has offered unprecedented opportunity to scan the physical environment of a large urban area at low costs. Drive-by sensing (DS), where the sensors are mounted onto moving vehicles (e.g. taxis, buses), is one type of such ubiquitous sensing that has received increased popularity due to the high mobility of the hosts and low maintenance costs. DS has been applied to various sensing scenarios such as air quality \citep{Gao2016, SHS2021}, traffic state \citep{YFLRL2021, GQDRJ2022}, noise \citep{Cruz2020a, Cruz2020b}, heat island \citep{TRMSO2020}, parking availability \citep{BAD2017, MJKCXGT2010}, and built environment  \citep{ PDB2012, AD2017,XCPJZN2019}.

Data, information, and knowledge generated from drive-by sensing constitute a positive externality of urban transport activities, to be explored in the era of smart cities. However, new challenges arise when one pursues such an externality while ensuring the efficiency and performance of relevant transport systems. In this work, we focus on e-hailing services with taxis as sensing hosts, as they have been widely consider for drive-by sensing due to their high spatial mobility, long operating hours and low maintenance costs \citep{BBLARB2016, BAD2017, ML2022, ZMLL2015}. However, the sensing efficacy of taxi fleets is limited \citep{OASSR2019, JHL2023}, primarily restricted by spatially heterogeneous trip demands and profit-oriented operations, which lead to unbalanced spatial distribution of the fleet (sensors), with undesirable consequences such as sensing blindspot and sampling bias.

To tackle such a challenge in taxi-based sensing, several interventional or incentivizing schemes are proposed to improve the distribution of sensing taxis \citep{ADFS2021, Masutani2015, GQ2022, FJLQG2021, C2020, XCPJZN2019}; see Section \ref{subsecLRt} for more details. Primarily focusing on taxi routing or drivers' subsidies, these mechanisms are shown to be effective to some extents, but are rendered impractical by either compromising the level of service (e.g. taking detoured routes) or requiring budgets that are difficult to secure in real-world operations (e.g. driver subsidies). In this work, we propose a matching and pricing scheme for e-hailing services, with explicit focus on the  sensing externality while ensuring level of service and budget balance. Specifically, the matching rule is based on sensing utility maximization, which selects trip requests with high sensing potentials. The pricing scheme allocates social welfare from matched trips according to the participants' contributions to the sensing externality. As the matching rule is strongly pitched towards riders, which could result in inefficiencies in the pick-up phase, a double auction process is employed to manage the level of service in case of long pick-up distances or wait times. The theoretical and practical contributions of this work are as follows:
\begin{itemize}
\item {\bf Conception and model:} This work is the first to explore the sensing externality of urban e-hailing services, by employing pricing to internalize such externality, and a double auction mechanism to manage the level of service. 

\item {\bf Theoretical properties:} The proposed matching and pricing scheme is theoretically proven to be allocative efficient (AE), individually rational (IR), and budget balancing (BB) and envy-free. We further show a property of {\it group incentive compatibility} (G-IC), which means the cohort (of drivers and riders) will always end up with the same total utility regardless of the mis-reporting behavior of its members.

\item {\bf Practical significance:} A simulation study based on real-world data demonstrates the effectiveness of the proposed scheme in achieving far greater sensing efficacy while ensuring the level of service and generating revenue for the platform. Experimental and comparative studies (with the classical VCG scheme) reveal managerial and practical insights of the scheme, in relation to demand distribution, fleet size, and mis-reporting behavior. 
\end{itemize}

The rest of this paper is organized as follows. Section \ref{secLR} provides an overview of relevant studies. Section \ref{secBP} offers some background material, including the new notion of opportunity cost. The DS matching and pricing scheme is elaborated in Sections \ref{secMP}. Section \ref{secSim} conducts numerical and simulation case studies, and Section \ref{secCon} offers some concluding remarks.

\section{Related work}\label{secLR}

This section offers a review of existing research in related areas, and highlights the unique features and contributions relative to these studies.

\subsection{Taxi drive-by sensing}\label{subsecLRt}

Conventionally, the sensing capabilities of taxi fleets are investigated in the context of opportunistic sensing, where data are collected passively from instrumented vehicles with minimum interference to the operation of the taxi fleet. \cite{BBLARB2016} use empirical taxi data in Rome to show that a fleet of 120 vehicles can achieve 80\% coverage of the downtown area within 24 hours. \cite{BAD2017} consider taxi fleet's suitability to scout on-street parking availability in San Francisco and found that about 500 taxis can adequately cover 90\% of road segments, although their distribution is spatially and temporally heterogenous.  \cite{ZMLL2015} assess the expected proportion of spatial grids in Beijing and Shanghai that can be covered in an hour, which is over 50\% with 1,700 and 1,900 taxis, respectively. However, the marginal gain of such coverage is limited due to the spatially unbalanced distribution of taxis. \cite{OASSR2019} use a ball-in-bin model to analyze the sensing capability of taxi fleets in several major cities, and found that taxi fleets have considerable yet limited sensing capabilities, primarily due to the heterogenous distribution of taxi trajectories.

To improve the sensing coverage of taxi fleets, various operational interventions or incentives are proposed in the literature, primarily from the perspective of routing between a given origin-destination pair. \cite{ADFS2021} propose an $\varepsilon$-perturbed route set based on the shortest route between a given origin-destination pair using the A-star algorithm, such that the spatial coverage of a taxi fleet can be optimized within such route detour set. \cite{Masutani2015} makes routing recommendations to relevant vehicles based on centralized decision making that seeks to maximize the sensing quality. \cite{GQ2022} focus on the cruising phase of taxi services by providing routing guidance to vacant taxis. On the incentivizing schemes, \cite{FJLQG2021} design a joint scheduling and pricing scheme based on one-sided auction, which rewards drivers for taking routes within an acceptable detour range from the shortest one, in the interest of covering more areas. \cite{C2020} and \cite{XCPJZN2019} form routing advice for vacant taxis with a limited incentive budget to maximize their sensing gain while cruising for the next customer.

Despite the widespread recognition of taxi fleets' sensing potentials, studies that aim to enhance their sensing capabilities have mainly resorted to route guidance for in-service or vacant taxis. This work is the first to explore the two-sided matching of drivers and riders, and use pricing as a means to internalize sensing externality. By coordinating demand and supply, the proposed scheme offers greater potential for sensing maximization than supply maneuvers considered in existing research.

\subsection{Matching and pricing in e-hailing services}

In this paper, we primarily focus on methods and mechanisms related to matching and pricing in e-hailing services. The reader is referred to \cite{AESW2012} and \cite{FDOBWK2013} for more general discussion of these topics.

The bipartite drive-rider matching problem have been studied from the perspectives of drivers, riders and the platform. Some key attributes or objectives in these problems include waiting time \citep{ASWFR2017, LNL2020}, detour/travel distance \citep{AESW2011, PXZLKA2015}, matching rate and demand-supply balance \citep{LS2015, SASG2015, QU2017, MJ2017, SASG2016, YWW2002}, profits and fairness \citep{BGTMMT, FDKODBCW2015, GMT2016}, and matching stability \citep{WAE2018}. 

The design of pricing policies has frequently resorted to auctions, where the platform (auctioneer) processes bidding information submitted by riders and drivers (participants) and offers a price for each match (transaction). An ideal auction outcome should satisfy individual rationality (IR), incentive compatibility (IC), allocative efficiency (AE) and budget balance (BB). As it is impossible to design a pricing scheme that satisfies AE, IR and IC without incurring a deficit (not BB) \citep{MS1983}, rendering popular pricing schemes such as VCG difficult to implement in practice, many studies attempt to trade AE or IC for BB. For example, \cite{ZZGSY2014} design a fixed pricing scheme and a two-sided reserve pricing scheme in a ride-sharing system to bound the deficit by sacrificing efficiency. \cite{ZWZ2016} propose a discounted trade reduction scheme for dynamic ride-sharing pricing, which achieve IR, IC, BB but not AE. \cite{ZWB2018} study an auction model for a one-sided ride-sharing market with variable reserve price constraints, which achieves IC, IR, and BB but at the expense of social benefit, which is bounded below by half of the optimal social benefit. \cite{LNL2020} consider both operational cost and schedule displacement in a carpool matching scenario, by developing a single-sided reward pricing policy as a robust and deficit-free alternative to VCG. However, underreporting cannot be ruled out (not IC). \cite{LNL2022} propose a trilateral matching problem for a quantity-based demand management system, which can eliminate any deficit arising from the VCG policy, and promotes ride-sharing at a relatively small cost to system's efficiency.

Unlike the aforementioned studies, this work focuses on aligning e-hailing operations with sensing externality. This is achieved by maximizing the spatial-temporal sensing gain in the matching stage and allocating social welfare according to their contributions to the sensing externality. As a result, the scheme satisfies AE, IC, BB, and group-IC, ensuring its operational feasibility in practice.

\section{Background and preliminaries}\label{secBP}
This section describes several elements that are essential to the main discussion of the matching and pricing scheme, namely the cost structure (Section \ref{subsecCS}-\ref{subsecOC}), the double auction mechanism (Section \ref{subsecDAP}), and the quantification of sensing externality (Section \ref{subsecDSquant}). We begin with mathematical symbols and notations to be used in this paper, shown in Table \ref{tabmath}.

	\setlength\LTleft{0pt}
	\setlength\LTright{0pt}
	\begin{longtable}{@{\extracolsep{\fill}}rl}
		\caption{Mathematical symbols}	\label{tabmath} \\
			\hline
			\multicolumn{2}{l}{Sets and indices}     \\
			\hline
			$\mathcal{D}$    & Set of drivers, indexed by $d\in \mathcal{D}$ 
			\\
			$\mathcal{R}$      & Set of riders, indexed by $r\in \mathcal{R}$
			\\
			$\mathcal{D}^*$ & Set of matched drivers
			\\
			$\mathcal{R}^*$ & Set of matched riders
			\\
			$T$ & Set of sensing intervals $t\in T$
			\\
			$\mathcal{E}_t$ & Set of decision epochs $\varepsilon_t^i$, $i=1,\ldots, n$ within $t$
			\\
						\hline
			\multicolumn{2}{l}{Parameters and constants}                                                                                                                              \\
			\hline
			$l=(d,r)$               & A potential match of driver $d$ and rider $r$
			\\
			$s_{r}$     & Origin of the trip request made by rider $r$
	          	\\
			 $t_{r}$     & Destination of the trip request made by rider $r$
			\\
			$\tau _{dr}$ & Pick-up distance from the current location of driver $d$ to rider $r$
			\\
			$\tau_{d}^{\text{min}}$       & Minimum pick-up distance for driver $d$ to the nearest rider 
			\\
			$\tau_{r}^{\text{min}}$       & Minimum pick-up distance for rider $r$ from the nearest driver 
			\\
			$b_d$  & Cost per unit pick-up distance for driver $d$
			\\
			$\delta_r$    & Compensation per unit  pick-up distance for rider $r$
			\\
			$P_d(b_d)$ & Driver's valuation of a matched trip
			\\
			$P_r(\delta_r)$ & Rider's valuation of a matched trip
			\\
			$h_{r}$       & Distance from rider $r$'s origin $s_r$ to destination $t_r$, following the shortest route
			\\
			$\alpha$      & Monetary value per unit travel distance for driver (public information)
			\\
			$\beta $     & Monetary value per unit travel distance for rider (public information) 
			\\
			$f(t_r)$   & Opportunity cost that depends on the destination $t_r$ of a trip 
			\\
			$\sigma_{dr}$   & Social welfare of a matched trip $l=(d, r)$ 
			\\
			$\zeta_{dr}$   & Sensing externality of a matched trip $l=(d, r)$ 
			\\                   
			\hline
			\multicolumn{2}{l}{Decision variables}                                                                                                                              \\
			\hline
			$x_{dr}$ & Equals $1$ if driver $d$ and rider $r$ are matched, and 0 otherwise        
			\\
			$q_d$ & Platform's payment to the driver  
			\\
			$q_r$ &   Platform's charge to the rider                                                                                                                                        \\\hline
	\end{longtable}

\subsection{Cost structure}\label{subsecCS}

An online e-hailing platform collects relevant trip information submitted by drivers and riders in a decision-making epoch (e.g. several minutes). Every rider $r$ expects to complete a planned trip with origin $s_r$ and destination $t_r$.

We consider a potential match $l=(d,r)$, where the driver $d$ and rider $r$ valuate the trip independently. In particular, the driver's valuation (desired payment) is expressed as:
\begin{equation}\label{Ud} 
P_d(b_d)=\alpha \cdot h_{r}+b_d\cdot(\tau_{dr}-\tau^{\text{min}}_d) + f(t_r)
\end{equation} 
\noindent where the first term on the right hand side is the cost for serving the requested trip; the second term is the additional cost for extra pick-up distance relative to the nearest rider. The third term $f(t_r)$, which is expressed in terms of the destination of the trip $t_r$, is introduced to reflect the opportunity cost, which will be elaborated in Section \ref{subsecOC}.

Regarding the rider $r$, the valuation (willingness to pay) is:
\begin{equation}\label{Ur}
P_r(\delta_r) =\beta \cdot h_r -\delta_r\cdot(\tau_{dr}-\tau^{\text{min}}_r )
\end{equation}

\noindent where the first term concerns with the total distance traveled, and we typically set $\beta>\alpha$; the second term is the compensation associated with extra pick-up distance relative to the nearest driver.

The formulae \eqref{Ud} and \eqref{Ur} simultaneously consider the travel distance of the requested trip, the driver’s pick-up phase and the rider’s waiting phase. The following are implied: 
\begin{itemize}
\item For longer travel distance, the driver is expected to receive more compensation, and the rider is expected to pay more.
\item A driver/rider not matched to her nearest counterpart is expected to receive compensation that is proportional to the extra pick-up time. In this case, the participant makes independent bids based on her personal preference (i.e. $b_d$ or $\delta_r$).

\item The driver's valuation of a trip also depends on her assessment of subsequent cost of finding the next order, a concept referred to as the {\it opportunity cost}. We argue that the opportunity cost may influence the driver's willingness to accept a remote trip (i.e. trips ending with low local trip demands).
\end{itemize}

\subsection{The opportunity cost}\label{subsecOC}

When a driver finishes an order, she needs to cruise for a certain period, the duration of which is dependent on the probability that a trip request is received. If the destination of the last order has a low trip count (e.g. rural areas), the driver incurs a non-negligible cost because she may have to travel a long distance in search of the next order. Such an {\it opportunity cost} may influence a driver's valuation of a trip request because of its destination $t_r$, and hence her willingness to accept it. 

To elaborate the opportunity cost, we begin with the notion of {\it order prospect}, denoted $p(t_r)$, which measures the likelihood of receiving a request from a given location $t_r$. To fix the idea, we mesh the study area into grids $g\in G$, and obtain the probability densities of trip requests $\big\{n_g\geq 0: g\in G,\,\sum_{g\in G}n_g=1\big\}$ (such a distribution can be estimated based on historical request data). The order prospect is defined as a weighted sum of the probability densities:
\begin{equation}\label{prodef} 
p(t_r)=\sum_{g\in G}w_{g, t_r}\cdot  n_g
\end{equation}
\noindent where the weight $w_{g,t_r}$ measures the relevance of grid $g\in G$ to $t_r$, and we stipulate that $w_{g,t_r}$ decays as $g$ is further away from $t_r$. Formula \eqref{prodef} conforms to the following intuitions.
\begin{itemize}
\item The order prospect $p(t_r)$ depends on the probability densities of trip requests in the destination $t_r$ as well as near-by areas.

\item Such a dependence is smaller for girds further away from the destination $t_r$.
\end{itemize}
 
The opportunity cost $f(p(t_r))$ is expressed as a monotonically decreasing function of the order prospect $p(t_r)$. Illustrative examples of order prospects and opportunity costs are provided in a real-world case study, in Section \ref{subsecillex}.

\subsection{The double auction process}\label{subsecDAP}

	In a double auction process, the platform assumes the role of the auctioneer, while the drivers and riders act as participants.

\begin{definition}(Double auction)
We consider a bilateral ride-sourcing market composed of riders (buyers), drivers (sellers) and the platform (auctioneer). The participants (riders \& drivers) independently perform valuation of a potential match, based on both public (e.g. O-D pair, rates per distance traveled) and private (rate per unit waiting time) information about the trip. It is possible that the participants misreport their private information for utility gain. After collecting the valuation information provided by all participants, the platform determines the trading scheme of the market (i.e. matched pairs and their prices).
\end{definition}

Following the classic auction theory \citep{MS1983}, we assume that there may be incentives for the participants to falsely report $b_{d}$ and $\delta_{r}$ in the bidding process. The reason is two-fold:
\begin{itemize}
    \item[(1)] In contrast to public information such as $\alpha$, $\beta$ and origin-destination pair of the trip, $b_d$ and $\delta_r$ are private information, and heterogeneous for different participants.
    \item[(2)] The parameters $b_{d}$ and $\delta_{r}$ directly affect the participants' expected pay-offs.
\end{itemize}

The platform uses \eqref{Ud} and \eqref{Ur} to process the information submitted by the participants, together with an estimate of the drive-by sensing rewards based on relevant trip information (such as the shortest path between an O-D pair), when making real-time matching and pricing decisions.

\subsection{Quantifying drive-by sensing (DS) externality}\label{subsecDSquant}

The drive-by sensing externality is quantified based on a meshed space and discrete time intervals. Let $G$ be a partition of the target area, and $g\in G$ represents a spatial grid (of size 1km$\times$1km in our paper). Let $t\in T$ be a sensing interval (e.g. 1 hour). Then, the sensing quality of grid $g$ during $t$ is expressed as follows \citep{HJNLL}:
$$
\phi_{g,t}(N_{g,t})=N_{g,t}^{\lambda} \qquad \lambda\in (0,1).
$$
\noindent Here, the sensing quality $\phi_{g,t}$ is a function of $N_{g,t}$, which is the number of distinct vehicles that have visited $g$ during $t$. We stipulate that $\phi_{g,t}(\cdot)$ is an increasing function, as more vehicle visits provide more information, but the marginal gain $\phi_{g,t}'(\cdot)$ is monotonically decreasing. The property of decreasing marginal sensing gain is important for optimizing the sensing quality over multiple grids, as it prevents over-concentration of vehicles in a few grids. Note that the value of $\lambda$ depends on the underlying application of drive-by sensing \citep{JHL2023}. For the case study in this paper, we consider air quality monitoring with $\lambda=0.2$ \citep{HJNLL}.

The overall spatial-temporal sensing utility afforded by a fleet of vehicles instrumented with mobile sensors in a target area can be quantified as:
\begin{equation}\label{eqnPhi}
\Phi=\sum_{t\in T}\mu_t \sum_{g\in G} w_g \phi_{g,t}(N_{g,t}),
\end{equation}
\noindent where $\mu_t$ and $w_g$ are temporal and spatial weights associated with drive-by sensing, such that $\sum_{t\in T}\mu_t=1$, $\sum_{g\in G}w_g=1$. These weights can be user-defined to reflect sensing priorities of certain location during certain time.

Within the context of online vehicle-passenger matching, the quantification of sensing externality is more subtle than \eqref{eqnPhi}. Each sensing period $t\in T$ is further divided into smaller decision-making epochs $\varepsilon_t^i$ (e.g. 3 min), $i=1,\ldots, n$, in which the platform collects relevant information on riders and drivers, and makes matching and pricing decisions accordingly. In this process, the drive-by sensing gain of a potential match $l=(d, r)$ considered in epoch $\varepsilon_t^i$ is calculated as
\begin{equation}\label{zetadr}
\zeta_{dr}=\sum_{g \in G_l} \big((N_{g,t}(\varepsilon_t^i)+1)^{\lambda}-N_{g,t}(\varepsilon_t^i)^{\lambda}\big),
\end{equation}
\noindent where $G_l\subset G$ is the subset of grids traversed by the matched trip $l$, $N_{g,t}(\varepsilon_t^i)$ denotes the number of vehicles that have covered (or are scheduled to cover) grid $g$ by the beginning of epoch $\varepsilon_t^i$. In prose, $\zeta_{dr}$ represents the marginal sensing externality contributed by the extra trip $l=(d,r)$.

\section{The joint matching-pricing scheme}\label{secMP}

This section elaborates the proposed matching and pricing scheme, along with its mathematical properties, in Sections \ref{subsecDSdesc}-\ref{subsecDSprop}. As a benchmark for comparison, we also describe the VCG scheme in Section \ref{subsecVCG}.


The entire analysis horizon is divided into decision epochs. In each epoch the platform receives relevant information of the participants, including the sets of unmatched drivers $\mathcal{D}$ and riders $\mathcal{R}$, their bids $\{b_d,\,d\in\mathcal{D}\}$ and $\{\delta_r,\,r\in\mathcal{R}\}$. The platform then needs to determine (1) The matched pairs of riders and drivers; and (2) the prices (charges and payments) of each match.

For a potential match $l=(d,r)$ with given bids $\delta_r$ and $b_d$, the social welfare is
\begin{equation}\label{socwell}
\sigma_{dr} =P_r(\delta_r) -P_d(b_d),
\end{equation}
where $P_r(\delta_r)$ and $P_d(b_d)$ are the participants' valuations of the matched trip $(d,r)$; see \eqref{Ud} and \eqref{Ur}.  The platform’s payment to the driver $d$ is denoted $q_d$, and the charge to the rider $r$ is $q_r$. Their utilities are respectively defined as:
\begin{equation}\label{utilityd}
u_d =q_d-P_d(b_d),
\end{equation}
\begin{equation}\label{utilityr}
u_r =P_r(\delta_r)-q_r.
\end{equation}
In other words, the driver's utility is the compensation paid by the platform minus her valuation of the trip; the rider's utility is her valuation of the trip minus the price paid to the platform.

\subsection{The VCG matching and pricing scheme}\label{subsecVCG}

In the classical VCG scheme, the matching decision is driven by social welfare maximization over all matched pairs:
\begin{equation}\label{vcgmopt1}
\hbox{[Social welfare]}~~  \max_{{\bf x}=(x_{dr})_{d\in\mathcal{D}, r\in\mathcal{R}}} V=\sum_{r\in\mathcal{R}} \sum_{d\in\mathcal{D}} \sigma_{dr} x_{dr}
\end{equation}
\begin{equation}\label{vcgmopt2}
\hbox{s.t.}~\sum_{r\in \mathcal{R}}x_{dr}\leq 1,~\forall d\in \mathcal{D}
\end{equation}
\begin{equation}\label{vcgmopt3}
\sum_{d\in \mathcal{D}}x_{dr}  \leq 1,~\forall r\in\mathcal{R} 
\end{equation}
\begin{equation}\label{vcgmopt4}
x_{dr} \in\left \{ 0 ,1\right \}, ~\forall d\in\mathcal{D},\,r\in\mathcal{R}
\end{equation}
\noindent Here, the binary decision variables $x_{dr}$ is 1 if driver $d$ and rider $r$ are matched. The optimal solution is denoted ${\bf x}^*=\{x_{dr}^*\}$ and $V^*$.

Based on the matching module \eqref{vcgmopt1}-\eqref{vcgmopt4}, we define $V_{d-}$ and $V_{r-}$ to be the maximum social welfare after removing driver $d$ and rider $r$ from the pool, respectively. Then, the VCG pricing scheme works as follows: For driver $d$, a non-negative bonus $V^*-V_{d-}$ is added to the bidding valuation $P_d(b_d)$ to form the final payment; for rider $r$, a non-negative subsidy of $V^*-V_{r-}$ is subtracted from the bidding valuation $P_r(\delta_r)$ to form the final charge. The following algorithm explains such a procedure. Here, all relevant quantities are from a single decision epoch. 

	\setlength\LTleft{0pt}
	\setlength\LTright{0pt}
	\begin{longtable}{@{\extracolsep{\fill}}rl}
	\hline
	\multicolumn{2}{l}{\bf Algorithm 1: VCG-based matching and pricing strategy}
	\\
	\hline
	{\bf Input}  & Set of unmatched drivers $\mathcal{D}$ and riders $\mathcal{R}$, their bids $\{b_d,\,d\in\mathcal{D}\}$ and $\{\delta_r,\,r\in\mathcal{R}\}$,
	\\
	&   riders' O-D information.	
 	\\
{\bf Step 1} & Solve \eqref{mopt1}-\eqref{mopt5} with social welfare as objective to obtain the matching solution  
        \\
        & ${\bf x}^*$ and the maximum social welfare $V^*$. Let $\mathcal{D}^*$ and $\mathcal{R}^*$ be the sets of matched 
        \\
        & drivers and riders, and $U$ be the total drive-by sensing externality.
        \\
{\bf Step 2}    &For every driver $d\in \mathcal{D}^*$,
 	\\
 	& \quad \quad calculate $V_{d-}$ by maximizing social welfare in \eqref{mopt1}-\eqref{mopt5} with $d$ removed;
 	\\
 	& \quad \quad the bonus of driver $d$: $\rho_d=V^*-V_{d-}$;
 	\\
 	& \quad \quad payment to driver $d$: $q_{d} =P_d +\rho_d$.
 	\\
 	& End for
 	\\
 	&For every rider $r\in \mathcal{R}^*$
 	\\
 	& \quad \quad calculate $V_{r-}$ by maximizing social welfare in \eqref{mopt1}-\eqref{mopt5} with $r$ removed;
 	\\
 	& \quad \quad the bonus of rider $r$: $\rho_r=V^*-V_{r-}$;
 	\\
 	& \quad \quad charge to rider $r$: $q_r =P_r -\rho_r$.
 	\\
 	&End for
 	\\
 {\bf Output}	&The matching solution ${\bf x}^{*}$, pricing solution ${\bf q}^*$, maximum social welfare $V^*$, and 
 \\
 & sensing externality $U$. 
 	\\
	\hline
	\end{longtable}

\subsection{The drive-by sensing (DS) matching and pricing scheme}\label{subsecDSdesc}

The DS matching scheme focuses on the sensing gain $\zeta_{dr}$ \eqref{zetadr} contributed by a matched trip $(d, r)$, given that its route is known based on the origin and destination of the requested trip. In contrast to the VCG scheme, which maximizes the social welfare, the DS matching scheme aims to maximize the total sensing externality:
\begin{equation}\label{mopt1}
\hbox{[Sensing utility]}~~   \max_{{\bf x}=(x_{dr})_{d\in\mathcal{D}, r\in\mathcal{R}}} U=\sum_{r\in\mathcal{R}} \sum_{d\in\mathcal{D}} \zeta_{dr} x_{dr}
\end{equation}
\begin{equation}\label{mopt2}
\hbox{s.t.}~\sum_{r\in \mathcal{R}}x_{dr}  \leq 1,\quad \forall d\in\mathcal{D}
\end{equation}
\begin{equation}\label{mopt3}
\sum_{d\in\mathcal{D}}x_{dr}  \leq 1,\quad\forall r\in\mathcal{R}
\end{equation}
\begin{equation}\label{mopt4}
\sum_{r\in\mathcal{R}}\sum_{d\in\mathcal{D}}\sigma_{dr}x_{dr}   \geq 0  
\end{equation}
\begin{equation}\label{mopt5}
x_{dr} \in \{0, 1 \} , \quad \forall d\in\mathcal{D},\,r\in\mathcal{R}
\end{equation}

\noindent  Besides the maximization objective, another important distinction from the VCG scheme is constraint \eqref{mopt4}, which stipulates that the total social welfare of all matched trips is non-negative. This is to ensure individual rationality of the subsequent pricing scheme, see Proposition \ref{probIR}. Note that the presence of the opportunity cost in the driver's trip valuation $P_d(b_d)$ could render negative social welfare $\sigma_{dr}$, which is allowed here in the interest of sensing gain, while such a pair $(d, r)$ will be eliminated in the VCG scheme because it introduces negative social welfare.

Following the matching module, the DS pricing scheme works as follows. After receiving the participants' bids, the platform solves the matching problem \eqref{mopt1}-\eqref{mopt5} for ${\bf x}^*$ and $U^*$. Next, let $V$ be the total social welfare corresponding to the matching solution ${\bf x}^*$, which  is non-negative according to \eqref{mopt4}. Then, $V$ is distributed to the participants according to their contributions to the sensing externality, thereby internalizing sensing externality. The following algorithm elaborates such a matching and pricing scheme.

\setlength\LTleft{0pt}
	\setlength\LTright{0pt}
	\begin{longtable}{@{\extracolsep{\fill}}rl}
	\hline
	\multicolumn{2}{l}{\bf Algorithm 2: Drive-by sensing matching and pricing scheme}
	\\
	\hline
	{\bf Input}  & Set of unmatched drivers $\mathcal{D}$ and riders $\mathcal{R}$, their bids $\{b_d,\,d\in\mathcal{D}\}$ and $\{\delta_r,\,r\in\mathcal{R}\}$,
	\\
	&   riders' O-D information.	 
	\\
	{\bf Step 1} & Solve \eqref{mopt1}-\eqref{mopt5} with sensing externality as objective to obtain the matching   
        \\
        & solution ${\bf x}^*$ and the maximum sensing externality $U^*$. Let $\mathcal{D}^*$ and $\mathcal{R}^*$ be the sets 
        \\
        & of matched drivers and riders, and $V$ be the total social welfare.
        \\
{\bf Step 2}    & For every driver $d\in \mathcal{D}^*$
 	\\
 	& \qquad calculate $U^*_{d-}$ by maximizing sensing externality in \eqref{mopt1}-\eqref{mopt5} with $d$
 	\\
 	& \qquad  removed; the contribution to sensing externality of $d$: $\Delta U_d=U^*-U^*_{d-}$;
 	\\
 	& End for
 	\\
    & For every rider $r\in \mathcal{R}^*$
 	\\
 	&\qquad  calculate $U_{r-}^*$ by maximizing sensing externality in \eqref{mopt1}-\eqref{mopt5} with $r$ 
 	\\
 	&\qquad  removed; the contribution to sensing externality of $r$: $\Delta U_r=U^*-U^*_{r-}$.
 	\\
 	& End for
 	\\
  {\bf Step 3}  &For every driver $d\in \mathcal{D}^*$
 	\\
 	&\qquad calculate share of the sensing externality $\displaystyle \lambda_{d} ={\Delta U_d \over \sum\limits_{d'\in\mathcal{D}^*}\Delta U_{d'} +\sum\limits_{r'\in\mathcal{R}^*} \Delta U_{r'}}$;
 	\\
 	& \qquad the bonus of driver $d$: $\rho_d =V\cdot \lambda_d$;
 	\\
 	& \qquad payment to driver $d$: $q_d =P_d +\rho_d$;
 	\\
 	& End for
 	\\
	&For every rider $r\in \mathcal{R}^*$
 	\\
 	&\qquad calculate share of the sensing externality $\displaystyle \lambda_{r} ={\Delta U_r \over \sum\limits_{d'\in\mathcal{D}^*}\Delta U_{d'} +\sum\limits_{r'\in\mathcal{R}^*} \Delta U_{r'}}$;
 	\\
 	& \qquad the bonus of rider $r$: $\rho_r =V\cdot \lambda_r$;
 	\\
 	& \qquad charge to rider $r$: $q_r =P_r -\rho_r$.
 	\\
 	& End for
 	\\
{\bf Output} & The matching solution ${\bf x}^{*}$, pricing solution ${\bf q}^*$, maximum sensing externality $U^*$  
\\
& and total social welfare $V$.
 	\\
	\hline
	\label{longtab2}
	\end{longtable}

\subsection{Properties of the matching and pricing schemes}\label{subsecDSprop}

The following properties of a matching and pricing scheme are frequently discussed.

\begin{definition}
Let ${\bf y}^*$ be an optimal solution of \eqref{mopt1}-\eqref{mopt5}, $\mathcal{D}^*$ and $\mathcal{R}^*$ be the set of matched drivers and riders, respectively. A pricing solution is called ideal if and only if the following four conditions are met:
\begin{itemize}
\item[(1)] Budget balancing (BB) or weakly budget balancing (WBB): The total payment to the drivers is no greater than the total charge to the riders:
\begin{equation}\label{BB}
\text{BB:}~\sum_{d\in\mathcal{D}^*}q_d=\sum_{r\in\mathcal{R}^*}q_r,
\qquad
\text{WBB:}~\sum_{d\in\mathcal{D}^*}q_d\leq \sum_{r\in\mathcal{R}^*}q_r.
\end{equation}
\item[(2)] Individually rational (IR): Any driver receives a payment that is no less than his/her valuation of the trip; any rider pays no more than his/her own valuation of the trip.
\begin{equation}\label{IR}
q_d \geq P_d(b_d)~~\forall d\in\mathcal{D}^*,\qquad q_r \leq P_r(\delta_r)~~\forall r\in\mathcal{R}^*.
\end{equation}
\item[(3)] Incentive compatible (IC): No participant can unilaterally increase their utility by misreporting their bidding valuations $b_d$ or $\delta_r$.
\item[(4)] Allocative efficient (AE): The outcome of the matching and pricing scheme reaches maximum utility for the system, which can take the form of total social welfare \eqref{vcgmopt1} or sensing gain \eqref{mopt1}. 
\end{itemize}
\end{definition}

It is widely known that the VCG scheme satisfies AE, IR and IC, but does not guarantee BB. In fact, the total deficit that the platform incurs is equal to the total social welfare minus the participants' bonuses:
$$
\sum_{\{d, r\vert x_{dr}^*=1\}}\big(\sigma_{dr}-\rho_d-\rho_r\big) =V^*-\sum_{d\in\mathcal{D}^*}\rho_d - \sum_{r\in\mathcal{R}^*}\rho_r.
$$

In the rest of this section, we discuss the properties of the proposed DS matching and pricing scheme.

\begin{proposition}{\bf (AE)} The DS scheme is allocative efficient. 
\end{proposition}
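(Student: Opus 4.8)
The plan is to argue directly from the definition of allocative efficiency, since for the DS scheme the relevant notion of ``maximum utility for the system'' is the sensing gain $U$ in \eqref{mopt1} rather than the social welfare. First I would recall that Step~1 of Algorithm~2 solves exactly the program \eqref{mopt1}--\eqref{mopt5}, whose objective is $U=\sum_{r\in\mathcal{R}}\sum_{d\in\mathcal{D}}\zeta_{dr}x_{dr}$ and whose feasible region is cut out by the one-to-one matching constraints \eqref{mopt2}--\eqref{mopt3}, the social-welfare nonnegativity constraint \eqref{mopt4}, and the integrality requirement \eqref{mopt5}. By construction the returned allocation ${\bf x}^*$ attains the optimal value $U^*$ over this feasible set, so no feasible rematching can yield a strictly larger sensing gain. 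This is precisely condition (4) for the sensing-gain form of system utility, and hence establishes AE of the matching module.

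Next I would verify that the pricing stage does not disturb this conclusion. Steps~2--3 of Algorithm~2 compute the marginal contributions $\Delta U_d$, $\Delta U_r$ and redistribute the fixed quantity $V$ (the social welfare already pinned down by ${\bf x}^*$) as bonuses $\rho_d$, $\rho_r$; none of these operations changes the matched set ${\bf x}^*$. Since the realized sensing gain is a function of the allocation alone, the overall scheme inherits the maximality of $U^*$ from the matching module, and AE of the full scheme follows.

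The only genuinely subtle point --- and the one I would flag explicitly --- is that allocative efficiency is usually phrased relative to the participants' \emph{true} valuations, whereas \eqref{mopt1}--\eqref{mopt5} is solved using \emph{reported} bids. The resolution is that the sensing objective $\zeta_{dr}$ in \eqref{zetadr} depends only on public route/grid information $G_l$ and the platform-tracked visit counts $N_{g,t}(\varepsilon_t^i)$, and is therefore invariant to any misreporting of the private parameters $b_d$ or $\delta_r$. The bids enter the program solely through the feasibility constraint \eqref{mopt4} (via $\sigma_{dr}$); thus the objective being maximized is the true sensing gain, and the maximum $U^*$ is attained over the reported feasible set. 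I expect this observation to be the crux of the argument, as everything else is immediate from the fact that the matching module is itself the maximization that defines AE.
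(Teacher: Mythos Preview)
Your proposal is correct and follows the same approach as the paper: the paper's proof is a single sentence observing that Step~1 of Algorithm~2 maximizes the total sensing externality \eqref{mopt1}, which is exactly the AE criterion in the sensing-gain sense. Your additional remarks about the pricing stage not altering ${\bf x}^*$ and about $\zeta_{dr}$ being invariant to misreported bids are sound elaborations that go beyond what the paper actually writes, but they do not change the underlying argument.
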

\begin{proof}
According to {\bf Step 1}, the matched pairs are such that the total sensing externality \eqref{mopt1} is maximized. 
\end{proof}

\begin{proposition}{\bf (BB)} The DS scheme is budget balancing.
\end{proposition}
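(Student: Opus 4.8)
The plan is to reduce the budget-balance identity to two elementary bookkeeping facts: that the externality shares sum to one, and that the total social welfare decomposes into the matched valuations. First I would observe from Step 3 of Algorithm 2 that every share $\lambda_d$ and $\lambda_r$ carries the \emph{same} denominator $\sum_{d'\in\mathcal{D}^*}\Delta U_{d'}+\sum_{r'\in\mathcal{R}^*}\Delta U_{r'}$, while the numerators $\Delta U_d$ and $\Delta U_r$ run over exactly the drivers and riders making up that denominator. Hence
\begin{equation}\label{eqnShares}
\sum_{d\in\mathcal{D}^*}\lambda_d+\sum_{r\in\mathcal{R}^*}\lambda_r=1,
\end{equation}
and consequently the total bonus disbursed satisfies $\sum_{d\in\mathcal{D}^*}\rho_d+\sum_{r\in\mathcal{R}^*}\rho_r=V\big(\sum_d\lambda_d+\sum_r\lambda_r\big)=V$.

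Next I would rewrite $V$, the total social welfare attached to the matching ${\bf x}^*$. Since the matching constraints \eqref{mopt2}--\eqref{mopt3} pair each matched driver with a unique matched rider, summing \eqref{socwell} over the matched pairs gives
\begin{equation}\label{eqnVdecomp}
V=\sum_{\{d,r\,\vert\,x_{dr}^*=1\}}\sigma_{dr}=\sum_{r\in\mathcal{R}^*}P_r(\delta_r)-\sum_{d\in\mathcal{D}^*}P_d(b_d),
\end{equation}
so $V$ is precisely the aggregate rider valuation minus the aggregate driver valuation over the matched set.

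With these two facts I would conclude directly. Substituting the payment rule $q_d=P_d+\rho_d$ and the charge rule $q_r=P_r-\rho_r$ from Step 3, the platform's net position is
\begin{equation}\label{eqnNet}
\sum_{r\in\mathcal{R}^*}q_r-\sum_{d\in\mathcal{D}^*}q_d=\Big(\sum_{r\in\mathcal{R}^*}P_r-\sum_{r\in\mathcal{R}^*}\rho_r\Big)-\Big(\sum_{d\in\mathcal{D}^*}P_d+\sum_{d\in\mathcal{D}^*}\rho_d\Big)=V-\Big(\sum_{d\in\mathcal{D}^*}\rho_d+\sum_{r\in\mathcal{R}^*}\rho_r\Big),
\end{equation}
where the last equality uses \eqref{eqnVdecomp}. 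By the bonus identity derived from \eqref{eqnShares}, the bracketed bonus total equals $V$, so the right-hand side vanishes and the BB equality in \eqref{BB} holds with equality.

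The argument is essentially a conservation law, so I do not anticipate a deep obstacle; the only point demanding care is the two summation identities. In particular, I would make sure \eqref{eqnVdecomp} is justified by the one-to-one pairing induced by the assignment constraints (so that summing over matched \emph{pairs} legitimately splits into a sum over matched riders and a sum over matched drivers), and that \eqref{eqnShares} is not vacuous, i.e. the common denominator $\sum_{d'}\Delta U_{d'}+\sum_{r'}\Delta U_{r'}$ is strictly positive. The latter follows whenever the maximized sensing externality $U^*$ is positive, since each marginal contribution $\Delta U_d=U^*-U^*_{d-}$ and $\Delta U_r=U^*-U^*_{r-}$ is nonnegative by optimality of the full pool; this positivity should be stated as a standing assumption or noted as the degenerate case where no matches occur and BB holds trivially.
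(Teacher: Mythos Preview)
Your proposal is correct and follows essentially the same route as the paper: substitute the payment and charge rules, recognize the aggregate valuation difference as $V$, and use that the externality shares $\lambda_d,\lambda_r$ sum to one so that the total bonus equals $V$ and the net revenue vanishes. Your additional remarks on the one-to-one pairing justifying \eqref{eqnVdecomp} and on the positivity of the shared denominator are points the paper leaves implicit, but they do not change the argument.
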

\begin{proof}
Recall that $q_d$ and $q_r$ are the platform's payments (charges) to the drivers (riders). Therefore, the platforms' total revenue is: 
\begin{align*}
\sum_{r\in \mathcal{R}^*} q_r -\sum_{d\in\mathcal{D}^*} q_d=&\sum_{r\in \mathcal{R}^*} (P_r-\rho_r)-\sum_{d\in \mathcal{D}^*} (P_d+\rho_d)
\\
=&\sum_{r\in\mathcal{R}}\sum_{d\in\mathcal{D}}x^*_{dr}\big(P_r-P_d\big) - \sum_{r\in\mathcal{R}^*}\rho_r -\sum_{d\in\mathcal{D}^*}\rho_d
\\
=& V - V\left(\sum_{r\in\mathcal{R}^*}\lambda_r + \sum_{d\in\mathcal{D}^*}\lambda_d \right)
\\
=& V- V\cdot {\sum_{r\in\mathcal{R}^*}\Delta U_r + \sum_{d\in\mathcal{D}^*}\Delta U_d \over \sum\limits_{d'\in\mathcal{D}^*}\Delta U_{d'} +\sum\limits_{r'\in\mathcal{R}^*} \Delta U_{r'}}=0.
\end{align*}
\end{proof}
\begin{remark}
A minor tweak to the DS pricing scheme is introduced in Section \ref{subsecdetail} to render more reasonable charges in certain extreme cases. Such a modification in fact makes the scheme weakly budget balancing, i.e. the platform's revenue becomes positive.
\end{remark}

\begin{proposition}\label{probIR}{\bf (IR)} The DS scheme is individually rational 
\end{proposition}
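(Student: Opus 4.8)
The plan is to verify the two inequalities in \eqref{IR} directly from the pricing rules in Algorithm 2. Since the payment and charge are $q_d = P_d + \rho_d$ and $q_r = P_r - \rho_r$, the requirements $q_d \geq P_d(b_d)$ and $q_r \leq P_r(\delta_r)$ are each equivalent to the non-negativity of the corresponding bonus, i.e. $\rho_d \geq 0$ for every $d\in\mathcal{D}^*$ and $\rho_r \geq 0$ for every $r\in\mathcal{R}^*$. Thus the whole proposition collapses to a sign analysis of $\rho_d = V\lambda_d$ and $\rho_r = V\lambda_r$.

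First I would dispatch the scalar factor $V$. By construction $V$ is the total social welfare $\sum_{r\in\mathcal{R}}\sum_{d\in\mathcal{D}} \sigma_{dr} x^*_{dr}$ evaluated at the matching solution ${\bf x}^*$, which is feasible for \eqref{mopt1}--\eqref{mopt5}; constraint \eqref{mopt4} then forces $V \geq 0$. This is precisely the role that \eqref{mopt4} was built to play, so no extra work is needed here. It remains to show that the shares $\lambda_d$ and $\lambda_r$ are non-negative. Writing $\lambda_d = \Delta U_d \big/ \big(\sum_{d'\in\mathcal{D}^*} \Delta U_{d'} + \sum_{r'\in\mathcal{R}^*} \Delta U_{r'}\big)$, it suffices to establish that every marginal-contribution term $\Delta U_d = U^* - U^*_{d-}$ and $\Delta U_r = U^* - U^*_{r-}$ is non-negative, since then numerator and denominator are both non-negative and $\lambda_d, \lambda_r \in [0,1]$.

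The crux --- and the step I expect to require the most care --- is the monotonicity claim $U^*_{d-}\leq U^*$, namely that deleting a participant cannot raise the optimal sensing utility. The argument is a feasible-set containment: any ${\bf x}$ feasible for the reduced problem (with $d$ removed) extends to a feasible point of the full problem by setting $x_{dr}=0$ for all $r$, and this extension leaves both the objective \eqref{mopt1} and the welfare sum unchanged. Hence the reduced feasible region embeds into the full one, forcing $U^*_{d-}\leq U^*$ and therefore $\Delta U_d \geq 0$; the identical argument yields $\Delta U_r \geq 0$. The single point to watch is that the side constraint \eqref{mopt4} must not break this embedding --- it does not, because the extended solution carries the very same social-welfare sum, so feasibility in the reduced problem implies feasibility in the full problem.

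Combining the pieces, $V \geq 0$ together with $\lambda_d,\lambda_r \geq 0$ gives $\rho_d, \rho_r \geq 0$, which is exactly \eqref{IR}. A closing remark I would add addresses the degenerate case in which the common denominator $\sum_{d'\in\mathcal{D}^*}\Delta U_{d'} + \sum_{r'\in\mathcal{R}^*}\Delta U_{r'}$ vanishes: then every matched participant contributes zero marginal sensing utility, and one adopts the convention $\rho_d = \rho_r = 0$ (so $q_d = P_d$ and $q_r = P_r$), which satisfies \eqref{IR} trivially.
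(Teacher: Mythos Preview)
Your proposal is correct and follows essentially the same approach as the paper: reduce \eqref{IR} to $\rho_d,\rho_r\geq 0$, then factor each bonus as $V\cdot\lambda$ and argue that $V\geq 0$ by constraint \eqref{mopt4} and that each share $\lambda_d,\lambda_r\geq 0$. The only difference is that the paper dispatches the non-negativity of the shares with a single word (``obviously''), whereas you supply the underlying monotonicity argument $U^*_{d-}\leq U^*$ via feasible-set containment and also handle the degenerate zero-denominator case; these additions are welcome refinements but do not change the route.
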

\begin{proof}
According to constraint \eqref{mopt4}, the total social welfare $V\geq 0$ following {\bf Step 1} of the algorithm. Obviously, the sensing externality share $\lambda_d,\,\lambda_r \geq 0$ for all $d\in\mathcal{D}^*$ and $r\in\mathcal{R}^*$. Therefore, the bonuses $\rho_d=V\cdot\lambda_d$ and $\rho_r=V\cdot \lambda_r$ are all non-negative. This finishes the proof.
\end{proof}

The DS scheme is only partially incentive compatible as it does not rule out over-reporting, as characterized by the following results.  

\begin{lemma}
Under the DS policy, when driver $d$ (rider $r$) truthfully reports $\bar b_d$ ($\bar \delta_r$) and was matched to no one, she cannot falsely reports $b_d$ ($\delta_r$) to increase her utility. 
\end{lemma}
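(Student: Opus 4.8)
The plan is to exploit one structural feature of the DS matching problem: the sensing objective $U$ in \eqref{mopt1} is completely independent of the bids $b_d,\delta_r$, which enter \eqref{mopt1}--\eqref{mopt5} only through the social-welfare feasibility constraint \eqref{mopt4}. From \eqref{Ud}--\eqref{Ur} I would first record the sign conventions: for a match containing driver $d$, raising $b_d$ raises $P_d$ and hence lowers $\sigma_{dr}$, while $\tau_{dr}-\tau^{\text{min}}_d\ge0$ by definition of the nearest-rider distance; symmetrically for a rider, raising $\delta_r$ lowers $P_r$ and $\sigma_{dr}$, with $\tau_{dr}-\tau^{\text{min}}_r\ge0$. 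Thus over-reporting tightens \eqref{mopt4} (shrinks the feasible set) and under-reporting relaxes it (enlarges it). I argue the driver case; the rider case is identical with $\delta_r$ replacing $b_d$. Throughout, the realized utility is evaluated at the true cost, i.e. $u_d=q_d-P_d(\bar b_d)$ with $q_d=P_d(b_d)+\rho_d$, so that the truthful-but-unmatched baseline is $u_d=0$.

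For over-reporting ($b_d>\bar b_d$), the feasible set only shrinks, while every matching that excludes $d$ retains its social welfare and hence its feasibility. The truthful optimum excludes $d$ by hypothesis, so it remains feasible and still attains the optimal value $U^*$; since no larger value is now reachable, it stays optimal and $d$ remains unmatched, leaving $u_d=0$. No gain is possible in this direction.

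For under-reporting ($b_d<\bar b_d$), the feasible set grows, and the only newly feasible matchings are those containing $d$, since matchings avoiding $d$ have bid-independent welfare and thus unchanged feasibility. Suppose the report matches $d$ to some $r$ in a new optimum $\mathbf{x}'$, and let $V^{\mathrm{true}}$ and $V=V^{\mathrm{report}}$ denote the total social welfare of $\mathbf{x}'$ computed with the true and the reported bid, respectively. The pivotal claim is $V^{\mathrm{true}}<0$: were $V^{\mathrm{true}}\ge0$, then $\mathbf{x}'$ would satisfy \eqref{mopt4} under truthful reporting, and since $U(\mathbf{x}')\ge U^*$ it would be a truthful optimum containing $d$, contradicting the hypothesis that $d$ is matched to no one. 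Writing $\Delta:=(\bar b_d-b_d)(\tau_{dr}-\tau^{\text{min}}_d)\ge0$ for the reported discount, the two welfares differ only in $d$'s match, giving $V=V^{\mathrm{true}}+\Delta$. Then $u_d=\big(P_d(b_d)-P_d(\bar b_d)\big)+\rho_d=-\Delta+V\lambda_d=V^{\mathrm{true}}\lambda_d-\Delta(1-\lambda_d)$, and since the shares satisfy $\lambda_d\in[0,1]$ and $\Delta\ge0$ while $V^{\mathrm{true}}<0$, we obtain $u_d\le0$. Hence under-reporting cannot raise her utility above the baseline $0$ either, which proves the claim (the rider computation being symmetric).

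The main obstacle is the pivotal claim that any matching which under-reporting newly activates must carry non-positive true social welfare. This is exactly where the bid-independence of $U$ and the optimality of the truthful solution have to be combined, and where care is needed with ties: if several matchings attain $U^*$, the phrase ``matched to no one'' must be read as $d\notin\mathcal{D}^*$ for the solution actually returned, and one should confirm that a truthfully feasible $\mathbf{x}'$ with $U(\mathbf{x}')=U^*$ and $d$ matched indeed contradicts this (e.g. under a fixed tie-breaking rule). Once this claim is secured, the remaining sign bookkeeping and the symmetric rider case are routine.
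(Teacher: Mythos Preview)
Your argument is correct and considerably more careful than the paper's own proof, which consists of a single observation: since the sensing objective $U$ in \eqref{mopt1} depends only on the participants' locations and O-D information and not on the bids, an unmatched participant cannot alter her matching status by misreporting, so her utility stays at zero. That one-line argument tacitly treats the feasible region of \eqref{mopt1}--\eqref{mopt5} as bid-independent, overlooking that the social-welfare constraint \eqref{mopt4} \emph{does} involve $b_d$ and $\delta_r$. You take this constraint seriously and split the analysis: over-reporting shrinks the feasible set while leaving the truthful optimum feasible, so $d$ stays unmatched; under-reporting enlarges the feasible set, but any newly activated matching that contains $d$ must carry strictly negative true social welfare, from which your identity $u_d=V^{\mathrm{true}}\lambda_d-\Delta(1-\lambda_d)\le 0$ closes the case. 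What your route buys is a genuine handling of \eqref{mopt4}; what the paper's route buys is brevity, at the cost of that rigor. The tie-breaking caveat you raise is legitimate and is not confronted by the paper's proof either; resolving it would require an explicit bid-independent tie-breaking rule in Step~1 of Algorithm~2.
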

\begin{proof}
Since the scheme matches participants by maximizing total sensing gain, which depends only on their locations and O-D information, rather than their biddings. Therefore, when a participant is not matched, falsely reporting $b_d$ or $\delta_r$ cannot alter her matching status and hence, her utility.
\end{proof}

\begin{lemma}
Under the DS policy, when a driver $d$ (rider $r$) truthfully reports $\bar b_d$ ($\bar \delta_r$) and was matched, she cannot increase her utility by under-reporting $b_d < \bar b_d$ ($\delta_r<\bar \delta_r$); however, she may do so by over-reporting $b_d > \bar b_d$ ($\delta_r>\bar \delta_r$). 
\end{lemma}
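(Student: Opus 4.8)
The plan is to show that, conditional on the participant remaining matched, her realized utility is an affine function of her reported bid whose slope has a definite sign, and then read off both halves of the claim directly from that sign. The single fact that drives everything is that the matching $\mathbf{x}^*$ returned by \eqref{mopt1}-\eqref{mopt5} and the externality shares $\lambda_d,\lambda_r$ computed in Step 3 depend only on the sensing quantities $\zeta_{dr}$ (hence on locations and O-D data), and never on the bids $b_d,\delta_r$. The bids enter the scheme solely through the social-welfare terms $\sigma_{dr}=P_r(\delta_r)-P_d(b_d)$, i.e.\ only through the aggregate $V$ redistributed in Step 3 and through the constraint \eqref{mopt4}.

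Next I would pin down matching-invariance in each deviation direction, which is exactly where the ``was matched'' hypothesis is used. When driver $d$ under-reports ($b_d<\bar b_d$), $P_d(b_d)$ falls and every $\sigma_{dr}$ involving $d$ rises, so \eqref{mopt4} is only relaxed; the sensing-optimal matching that matched $d$ remains feasible and optimal, and $d$ stays matched with the same $\lambda_d$. When $d$ over-reports ($b_d>\bar b_d$), \eqref{mopt4} tightens, but as long as the total social welfare stays non-negative the same matching is still the sensing-optimal feasible one, so $\lambda_d$ is again unchanged. In both regimes only the single term $\sigma_{dr}$ moves, so I can write $V$ as an explicit affine function of the reported bid,
$$V(b_d)=V(\bar b_d)-(b_d-\bar b_d)\,(\tau_{dr}-\tau^{\text{min}}_d).$$

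With this in hand the computation is short. The driver's true utility is $u_d=q_d-P_d(\bar b_d)=P_d(b_d)+V(b_d)\lambda_d-P_d(\bar b_d)$, and substituting the affine form of $V$ together with $P_d(b_d)-P_d(\bar b_d)=(b_d-\bar b_d)(\tau_{dr}-\tau^{\text{min}}_d)$ collapses everything to
$$u_d-\bar u_d=(b_d-\bar b_d)\,(\tau_{dr}-\tau^{\text{min}}_d)\,(1-\lambda_d),$$
where $\bar u_d=V(\bar b_d)\lambda_d$ is the truthful utility. Since $\tau_{dr}\geq\tau^{\text{min}}_d$ and $\lambda_d\in[0,1]$, the right-hand side is non-positive for $b_d<\bar b_d$ (under-reporting never helps) and non-negative for $b_d>\bar b_d$, and it is strictly positive whenever $\tau_{dr}>\tau^{\text{min}}_d$ and $\lambda_d<1$; this proves both parts, the second by exhibiting a regime where over-reporting strictly raises utility. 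The rider case is symmetric: using $P_r(\delta_r)=\beta h_r-\delta_r(\tau_{dr}-\tau^{\text{min}}_r)$ and $q_r=P_r-\rho_r$ gives $u_r-\bar u_r=(\delta_r-\bar\delta_r)(\tau_{dr}-\tau^{\text{min}}_r)(1-\lambda_r)$, with the identical sign pattern.

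The main obstacle I anticipate is the matching-invariance step rather than the algebra. In the over-reporting direction one must argue carefully that the deviation does not silently change the optimal matching (and hence $\lambda_d$) before the social-welfare constraint \eqref{mopt4} becomes binding, and that once it is violated the participant is simply dropped, so the affine formula only bounds the gain over the range where she stays matched. Confirming that the constraint can at worst bind for over-reports, and that under-reporting can never drop a matched participant (it only relaxes \eqref{mopt4}), is the delicate part; everything downstream of the affine expression for $V$ is routine.
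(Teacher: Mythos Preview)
Your proposal is correct and follows essentially the same approach as the paper: both arrive at the identical affine identity $u_d-\bar u_d=(b_d-\bar b_d)(\tau_{dr}-\tau_d^{\text{min}})(1-\lambda_d)$ (and its rider analogue) and read off the sign. If anything, you are more careful than the paper, which silently assumes matching-invariance and the constancy of $\lambda_d$; your explicit discussion of why constraint \eqref{mopt4} is only relaxed under under-reporting and only tightened under over-reporting supplies a step the paper omits.
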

\begin{proof}
The utility of the participant is expressed as
$$
u_d=q_d - P_d(b_d)\quad \forall d\in \mathcal{D}^*,\qquad u_r=P_r(\delta_r)-q_r\quad \forall r\in\mathcal{R}^*.
$$
When a matched driver $d$ falsely reports, the change of utility relative to the truthful one (distinguished using $\bar{\cdot}$) becomes:
\begin{align*}
u_d-\bar u_d 
&= P_d + V\lambda_d -\bar P_d - \bar V \lambda_d = P_d-\bar P_d +\lambda_d(V-\bar V)
\\
& = P_d - \bar P_d +\lambda_d(\sigma_{dr}-\bar \sigma_{dr})= P_d-\bar P_d - \lambda_d(P_d-\bar P_d)
\\
& =  (P_d-\bar P_d)(1-\lambda_d) = (\tau_{dr}-\tau_d^{\text{min}})(b_d -\bar b_d)(1-\lambda_d).
\end{align*}
Similarly, when a matched rider $r\in\mathcal{R}^*$ falsely report, the change of utility is:
\begin{align*}
u_r-\bar u_r &= \bar P_r- (P_r - V\lambda_r) - \bar V\lambda_r= \bar P_r -P_r -\lambda_r (\bar V -V)
\\
& = \bar P_r -  P_r - \lambda_r(\bar \sigma_{dr} - \sigma_{dr})= \bar P_r-  P_r - \lambda_r( \bar P_r - P_r)
\\
&= (\bar P_r-P_r)(1-\lambda_r)=  (\tau_{dr}-\tau_r^{\text{min}})(\delta_r-\bar \delta_r)(1- \lambda_r).
\end{align*}
Since $\tau_{dr}-\tau_d^{\text{min}}\geq 0$, $1-\lambda_d\geq 0$, $\tau_{dr}-\tau_r^{\text{min}}\geq 0$ and $1-\lambda_r\geq 0$, we conclude that $u_d-\bar u_d$ (respectively $u_r-\bar u_r$) and $b_d -\bar b_d$ (respectively $\delta_r-\bar \delta_r$) have the same sign. This completes the proof.
\end{proof}

Although the DS matching and pricing scheme does not prevent individuals to over-report, the next result shows that the sum of the participants' utilities is a constant, regardless of their falsely-reporting behavior. Such a property is termed {\it group incentive compatible} (G-IC), meaning that as a group, falsely reporting does not raise the total utility of the entire group.  In the following, symbols with $\bar \cdot$ always correspond to truthful reporting. 

\begin{proposition}{\bf (G-IC)}\label{propGIC}
Assume the biddings of driver $d$ and rider $r$ are: $b_d=\bar b_d +\varepsilon_d$, $\delta_r=\bar\delta_r+\varepsilon_r$, where $\varepsilon_d, \varepsilon_r\in \mathbb{R}$. Then, the total utility $\sum_{d\in\mathcal{D}^*} u_d+\sum_{r\in\mathcal{R}^*}u_r$ of matched participants is a constant, equal to the total social welfare $\bar V$ without any over-reporting.
\end{proposition}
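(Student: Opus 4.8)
The plan is to express the group's total utility directly in terms of the platform's payments and the participants' \emph{true} valuations, and then invoke budget balance (the BB proposition) to collapse the payment terms. The structural fact that makes this work is that the matching module \eqref{mopt1}--\eqref{mopt5} selects pairs by maximizing the sensing externality $U$, which depends only on locations and O-D data and \emph{not} on the reported bids $b_d,\delta_r$; consequently the matched sets $\mathcal{D}^*,\mathcal{R}^*$ and the externality shares $\lambda_d,\lambda_r$ are identical under truthful and untruthful reporting. I would take this invariance as the starting point (it is essentially the content of the two preceding lemmas), so that $\bar V=\sum_{r\in\mathcal{R}^*}\bar P_r-\sum_{d\in\mathcal{D}^*}\bar P_d$ refers to the same matched set as the reported welfare $V$.

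First I would record the realized utilities. Since an agent's true cost/value is governed by her true type, her realized surplus is measured against $\bar P_d,\bar P_r$, giving $u_d=q_d-\bar P_d$ for $d\in\mathcal{D}^*$ and $u_r=\bar P_r-q_r$ for $r\in\mathcal{R}^*$, where the payment $q_d=P_d(b_d)+V\lambda_d$ and the charge $q_r=P_r(\delta_r)-V\lambda_r$ are formed by the platform from the reported bids. Summing over matched participants and regrouping yields
\[
\sum_{d\in\mathcal{D}^*}u_d+\sum_{r\in\mathcal{R}^*}u_r
=\Big(\sum_{d\in\mathcal{D}^*}q_d-\sum_{r\in\mathcal{R}^*}q_r\Big)
+\Big(\sum_{r\in\mathcal{R}^*}\bar P_r-\sum_{d\in\mathcal{D}^*}\bar P_d\Big).
\]

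Next I would dispose of the two brackets separately. The second bracket is, by the definition of social welfare \eqref{socwell} summed over the (fixed) matched pairs, exactly $\bar V$. For the first bracket, the proof of the BB proposition shows $\sum_{d\in\mathcal{D}^*}q_d=\sum_{r\in\mathcal{R}^*}q_r$; crucially, that computation only used $\sum_{d\in\mathcal{D}^*}\lambda_d+\sum_{r\in\mathcal{R}^*}\lambda_r=1$, which holds for \emph{any} reported bids, so budget balance survives the perturbations $\varepsilon_d,\varepsilon_r$ and the first bracket vanishes. Combining the two evaluations gives $\sum_d u_d+\sum_r u_r=\bar V$, as claimed.

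The step I expect to demand the most care is the bookkeeping of reported versus true quantities: the payments $q_d,q_r$ and the welfare $V$ are formed from the \emph{reported} bids, whereas each agent's realized utility is measured against her \emph{true} valuation, and the argument only closes because the bid-dependent terms $P_d(b_d),P_r(\delta_r)$ enter the payments and $V$ with matching signs and cancel. A secondary point worth flagging is the standing assumption (inherited from the preceding lemmas) that the perturbations leave the sensing-optimal matching unchanged — in particular that constraint \eqref{mopt4} remains satisfied — so that $\mathcal{D}^*,\mathcal{R}^*$ and the shares $\lambda$ are common to both reporting profiles, which is what lets $\bar V$ be evaluated on the same matched set.
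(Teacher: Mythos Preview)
Your proof is correct but takes a cleaner, more structural route than the paper's. The paper expands each participant's utility explicitly through the perturbations, writing $u_d=V\lambda_d+(\tau_{dr}-\tau_d^{\text{min}})\varepsilon_d$ and $u_r=V\lambda_r+(\tau_{dr}-\tau_r^{\text{min}})\varepsilon_r$, introduces the welfare defect $\Delta\sigma_{dr}$, and then closes by combining $\sum(\lambda_d+\lambda_r)=1$ with $V+\sum\Delta\sigma_{dr}=\bar V$. You instead regroup the total utility into a payment block $\sum q_d-\sum q_r$ and a true-valuation block $\sum\bar P_r-\sum\bar P_d$, and kill the first by invoking budget balance directly. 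This is shorter, does not rely on the specific linear form of $P_d(b_d)$ and $P_r(\delta_r)$, and makes transparent that G-IC is essentially BB plus matching-invariance; the paper's computation, by contrast, exposes the individual-level utility shifts and thereby ties the proposition more visibly to the preceding lemma. Your flagging of the standing assumption that constraint \eqref{mopt4} remains slack (so that $\mathcal{D}^*,\mathcal{R}^*$ and the shares $\lambda$ are bid-invariant) is apt; the paper's proof silently makes the same assumption.
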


\begin{proof}
For a matched pair $(d, r)$ such that $x_{dr}^*=1$, the amount of social welfare reduction due to over-reporting is denoted
$$
\Delta \sigma_{dr}\doteq (\tau_{dr}-\tau_d^{\text{min}})\varepsilon_d + (\tau_{dr}-\tau_r^{\text{min}})\varepsilon_r.
$$
\noindent Therefore, the total social welfare, resulting from over-reporting, is expressed as
$$
V=\bar V -\sum_{\{d, r\vert x_{dr}^*=1\}}\Delta\sigma_{dr}.
$$
We deduce that:
\begin{align*}
&\sum_{d\in\mathcal{D}^*} u_d+\sum_{r\in\mathcal{R}^*}u_r
\\
=&\sum_{\{d, r\vert x_{dr}^*=1\}}u_d+u_r= \sum_{\{d, r\vert x_{dr}^*=1\}} \Big[\lambda_d V +  (\tau_{dr}-\tau_d^{\text{min}}) \varepsilon_d +\lambda_r V + (\tau_{dr}-\tau_r^{\text{min}})\varepsilon_r \Big]
\\
=&\sum_{\{d, r\vert x_{dr}^*=1\}}\Big[ (\lambda_d+\lambda_r) V+ \Delta\sigma_{dr} \Big] = V \sum_{\{d, r\vert x_{dr}^*=1\}}(\lambda_d+\lambda_r) + \sum_{\{d, r\vert x_{dr}^*=1\}}\Delta\sigma_{dr}
\\
=& V + \sum_{\{d, r\vert x_{dr}^*=1\}}\Delta\sigma_{dr}=\bar V.
\end{align*}
\noindent Here, we make use of the fact that the individual shares of the total social welfare, $\lambda_d$ or $\lambda_r$, add up to one.
\end{proof}

\begin{remark}
The significance of Proposition \ref{propGIC} is the fact that, for a group, the DS scheme is incentive compatible, despite its failure on an individual level. Note that this is also a rather general result, as no assumptions are made on the signs (i.e. works for both under-reporting and over-reporting), magnitude, or the individual heterogeneity of $\varepsilon_d$ and $\varepsilon_r$. Any individual or collective behavior regarding false reporting would not change the total utility of the group.
\end{remark}

Finally, we show that the outcome of the DS matching and pricing scheme is envy-free, which means drivers at the same location do not envy each other's future payoffs, and riders requesting the same trip do not envy each other's outcomes \citep{MFP2022}. Envy-freeness is an important aspect of fairness that is critical to the long-term health of a marketplace. 

\begin{definition}{\bf (Envy-free)} 
A scheme is envy-free for riders if any two riders $r, r'\in\mathcal{R}$ making the same trip requests (i.e. $s_{r}=s_{r'}$, $t_r=r_{r'}$ within the same decision epoch) have the same utility $u_r=u_{r'}$. A scheme is envy-free for drivers if any two drivers $d, d'\in\mathcal{D}$ submitting the same location to the platform within the same decision epoch have the same utility $u_d=u_{d'}$. 
\end{definition}

\begin{proposition}
The DS scheme is envy-free. 
\end{proposition}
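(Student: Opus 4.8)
The plan is to reduce envy-freeness to a statement about the sensing-externality shares $\lambda_r$ and $\lambda_d$, and then to exploit the fact that these shares are governed purely by the trip geometry rather than by the (private) bids.

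First I would simplify the utilities of matched participants. For a matched rider $r\in\mathcal{R}^*$ the charge is $q_r=P_r(\delta_r)-\rho_r$, so
\[
u_r=P_r(\delta_r)-q_r=\rho_r=V\,\lambda_r,
\]
and likewise for a matched driver $d\in\mathcal{D}^*$ the payment is $q_d=P_d(b_d)+\rho_d$, whence $u_d=q_d-P_d(b_d)=\rho_d=V\,\lambda_d$. Thus the utility of any matched participant equals exactly their allocated bonus. Since the total social welfare $V$ and the common denominator $\sum_{d'\in\mathcal{D}^*}\Delta U_{d'}+\sum_{r'\in\mathcal{R}^*}\Delta U_{r'}$ are shared by everyone in the epoch, proving envy-freeness is equivalent to showing that two riders with identical trip requests have the same marginal contribution $\Delta U_r=\Delta U_{r'}$, and that two drivers at the same location have $\Delta U_d=\Delta U_{d'}$.

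Next I would establish the underlying symmetry in the matching objective. The sensing gain $\zeta_{dr}$ in \eqref{zetadr} depends only on the set of grids $G_l$ traversed by the trip and on the incumbent counts $N_{g,t}(\varepsilon_t^i)$; crucially, it does not involve the bids $b_d,\delta_r$. Hence if $r$ and $r'$ share the same origin and destination, they traverse the same grids and $\zeta_{dr}=\zeta_{dr'}$ for every driver $d$; likewise, if $d$ and $d'$ report the same location, then $\zeta_{dr}=\zeta_{d'r}$ for every rider $r$. In either case the two participants are interchangeable in the sensing objective \eqref{mopt1}: relabelling one as the other carries any matching into a matching of equal sensing value.

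Finally I would push this interchangeability through the definition $\Delta U_r=U^*-U^*_{r-}$. Since $U^*$ treats $r$ and $r'$ symmetrically, it suffices to argue $U^*_{r-}=U^*_{r'-}$, i.e. that the best sensing attainable after deleting $r$ equals that after deleting $r'$; this follows by relabelling the optimal matching of one reduced problem into a feasible matching of the other with the same value. Concluding $\Delta U_r=\Delta U_{r'}$ yields $\lambda_r=\lambda_{r'}$ and hence $u_r=u_{r'}$, and the driver case is identical. The main obstacle is that the relabelling must respect the feasibility constraint \eqref{mopt4}, whose left-hand side $\sum_{r}\sum_{d}\sigma_{dr}x_{dr}$ does depend on the possibly unequal bids $\delta_r,\delta_{r'}$, so the feasible region need not be perfectly symmetric in the two riders. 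I would address this by observing that $\Delta U_r$ is a marginal \emph{sensing} quantity, insensitive to which of the two otherwise-identical riders carries the larger bid, and, in the borderline case where \eqref{mopt4} is active, by restricting the envy-free comparison to the matched cohort, where non-negativity of the social welfare is already guaranteed by construction.
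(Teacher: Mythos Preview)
Your approach mirrors the paper's: reduce each matched participant's utility to her bonus $\rho=V\lambda$, then argue that two participants with identical location/O-D data receive equal shares $\lambda$ because the sensing problem is symmetric in them. The paper's proof is terser and simply asserts that the matching program \eqref{mopt1}--\eqref{mopt5} ``depends solely on the participants' locations and trip O-Ds,'' never mentioning the point you correctly flag---that constraint \eqref{mopt4} involves $\sigma_{dr}$ and hence the private bids. In that sense you are being more careful than the paper.

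Your proposed fix, however, is not a proof: declaring $\Delta U_r$ ``insensitive'' to which twin carries the larger bid, or ``restricting the envy-free comparison to the matched cohort,'' does not establish that the relabelling $r\leftrightarrow r'$ preserves feasibility of \eqref{mopt4} in the reduced problems defining $U^*_{r-}$ and $U^*_{r'-}$ when that constraint binds. The paper simply does not engage with this; you identify it but leave it open. One further case worth making explicit (the paper's phrase ``same matching outcome'' glosses over it): if two identical riders are present and only one is matched, they do \emph{not} have the same matching status, but the matched one has $\Delta U_r=0$ because her twin is a perfect substitute in the reduced problem, so both end up with zero utility and envy-freeness still holds---subject, once more, to the \eqref{mopt4} caveat.
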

\begin{proof}
According to Algorithm 2, the utilities of a rider (or driver) is equal to her bonus $\rho_r$ (or $\rho_d$) if she is matched, and zero otherwise. Note that the DS-oriented matching plan \eqref{mopt1}-\eqref{mopt5} depends solely on the participants' locations and trip O-Ds, and the pricing scheme allocates the total social welfare $V$ by the individual's share of sensing contribution $\lambda_r$ or $\lambda_d$. Therefore, any pair of riders $r, r'$ making the same trip request at the same time have the same matching outcome, and their shares satisfy $\lambda_r=\lambda_{r'}$, hence $\rho_r=\rho_{r'}$. Similarly, any pair of drivers $d, d'$ at the same location and time have the same matching outcome and share $\lambda_d=\lambda_{d'}$, thus the same bonus $\rho_d=\rho_{d'}$. 
\end{proof}

\subsection{Implementation details of the matching and pricing schemes}\label{subsecdetail}
A few more techniques and details used in the implementation of the proposed scheme are described below. 
\begin{enumerate}
\item For rider-driver matching, a maximum search radius $R$ (km) is prescribed, such that a driver and rider over $R$ km apart are not considered for matching. In the numerical experiment below, $R=2$ (km).

\item In the DS scheme, the calculation of sensing externality of a matched trip $(d, r)$ does not include its pick-up phase. This ensures good service quality by avoiding unnecessarily long pick-up distances in the pursuit of sensing gain. In other words, the sensing gain is  solely based on the riders' requested trips, irrespective of the drivers locations.

\item It is possible for a trip to contribute significant sensing gain, such that the rider's bonus $\rho_r$ might exceed her willingness to pay $P_r$, leading to negative charges $q_r=P_r-\rho_r$, which is unreasonable. In this case, a lower bound $\alpha h_r$ on the charges will apply:
$$
q_r=\max\big(P_r-\rho_r,\, \alpha h_r \big),
$$ 
\noindent where $\alpha$ is the driver's cost per unit travel distance, and $h_r$ is the total distance traveled from the origin to the destination of the requested trip. The bonus saved: $\max (0,\, \alpha h_r -(P_r-\rho_r))$ becomes the platform's revenue, leading to weakly budget balance. 

\end{enumerate}

\section{Numerical study}\label{secSim}
This section offers an elaborated numerical study of the proposed scheme, starting with an illustrative example of relevant notions (Section \ref{subsecillex}), followed by a simulation-based comparative study that highlight the key characteristics and benefits of the proposed scheme (Section \ref{subsecSS}).

\subsection{Illustrative example}\label{subsecillex}

This part offers an intuitive example relating to the two matching and pricing schemes mentioned in this work. We also demonstrate the opportunity cost and how it might affect drivers' willingness to accept orders.

We begin with the order prospect function \eqref{prodef}, where the probability densities $n_g$'s are obtained directly from historical data (see Figure \ref{figprospects}a), and the weight $w_{g,t_r}$ is instantiated as a linear function of the distance $d_{g,t_r}$ from grid $g$ to the destination $t_r$:
$$
w_{g, t_r}= 1-d_{g,t_r}/M \quad \hbox{so that}\quad p(t_r)=\sum_{g\in G}\left(1-{d_{g, t_r}\over M} \right)n_g,
$$
\noindent where $M=\max_{g\in G}\left \{ d_{g,t_{r}} \right \}$. As the opportunity cost should be a monotonically decreasing function of $p(t_r)$, we consider the following form:
\begin{equation}\label{oppocostdef}
f(t_r)=f\big(p(t_r)\big)=
\begin{cases}
\xi \big(p^*-p(t_r)\big)  \qquad & p(t_r)\in [p^{\text{min}},\, p^*) 
\\
0 \qquad & p(t_r)\in [p^*,\, p^{\text{max}}]
\end{cases}
\end{equation}
\noindent where $\xi>0$, $p^{\text{min}}$ and $p^{\text{max}}$ are the minimum and maximum prospects over all grids, $p^*$ is a critical value. This formulation means that if the destination $t_r$ is in a grid with prospect greater than $p^*$, then the opportunity cost is zero; otherwise, the opportunity cost grows as the order prospect decreases. In the following numerical case study, $\xi=50$, and $p^*=0.9 p^{\text{max}}$.

We use Figure \ref{figprospects} to visualize the distribution of trip requests, order prospect $p(\cdot)$, and opportunity cost $f(\cdot)$, based on real-world data in Longquanyi District, Chengdu. As shown in Figure \ref{figprospects}(a), most of the trip requests are concentrated in the mid-east region, and the order prospect decays as we move away from this region. Moreover, the northern part, which is very far away from the mid-east region and has very few requests, ends up with very low order prospects (Figure \ref{figprospects}b) and high opportunity cost (Figure \ref{figprospects}c). For example, if a driver drops off a passenger at the northern region, she may incur a significant cost while cruising for the next order.

\begin{figure}[h!]
\centering
\includegraphics[width=\textwidth]{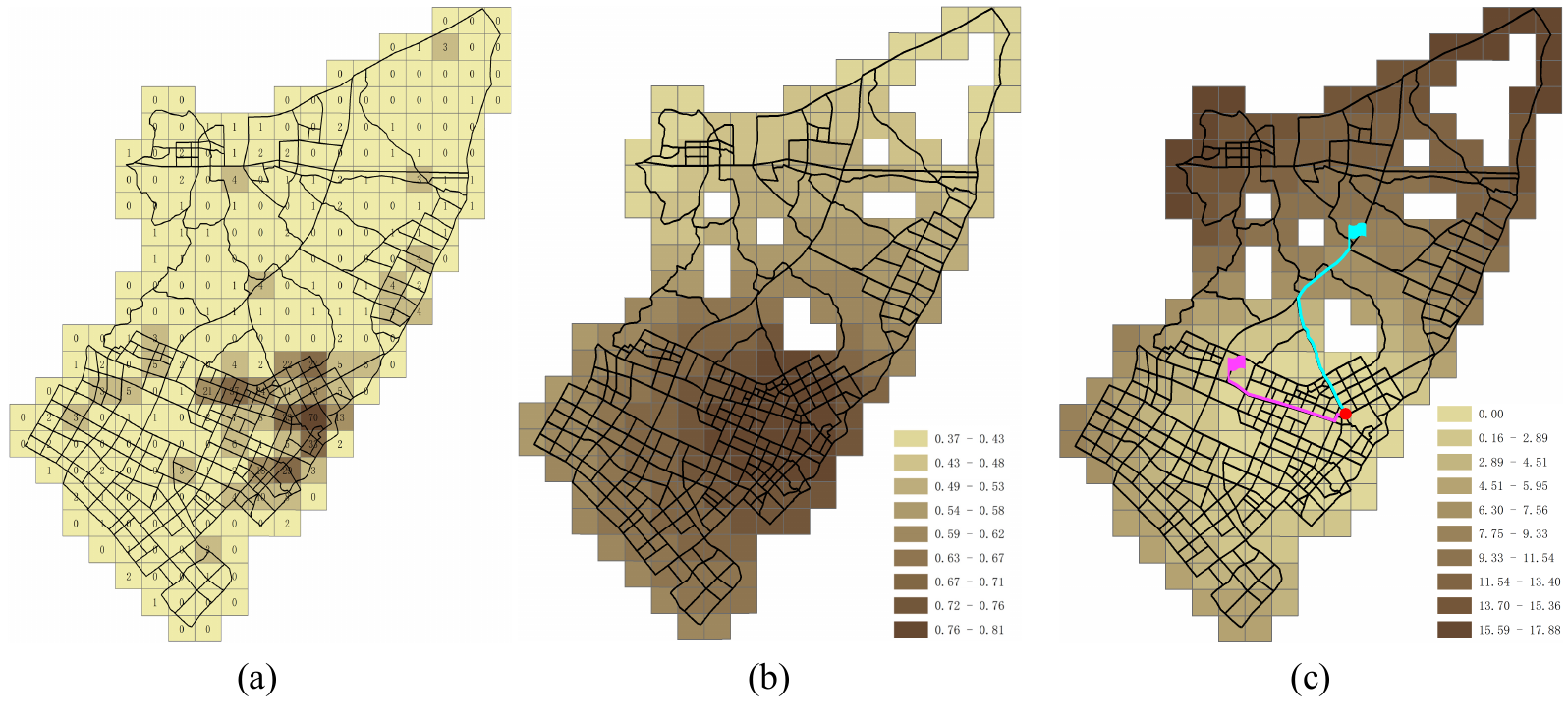}
\caption{Spatial distribution of number of trip requests (a), order prospect (b), and opportunity cost (c).}
\label{figprospects}
\end{figure}

We consider two riders $r_1, r_2$ with the same origin marked as a red dot, and distinct destinations marked as two flags in Figure \ref{figprospects}(c). The shortest routes serving these trips are marked for $r_1$ (7.2 km) and $r_2$ (4.8 km). The opportunity costs associated with the destinations are: $f(t_{r_1})=7.56$, $f(t_{r_2})=0$.

The monetary value per unit distance traveled (CNY/km) are respectively $\alpha=1.5$ for drivers and $\beta=2.75$ for riders. A vacant taxi $d$ is 0.5 km away from both riders, thus $\tau_{dr_1}=\tau_{dr_2}=\tau_r^{\text{min}}$. According to \eqref{Ud} and \eqref{Ur}, the participants' valuations are:
\begin{align*}
\text{match}~(d,\,r_1):& ~\begin{cases}
P_d=1.5\times7.2+ 0 + 7.56=18.36
\\
P_{r_1}=2.75\times 7.2 +0 =19.80
\end{cases}\quad\Longrightarrow \sigma_{dr_1}=1.44;
\\
\text{match}~(d,\,r_2): & ~
\begin{cases}
P_d=1.5\times 4.8+0+0=7.2
\\
P_{r_2}=2.75\times 4.8 +0  =13.20
\end{cases}\quad\Longrightarrow \sigma_{dr_2}=6.
\end{align*}

\noindent The VCG scheme selects the match $(d,\,r_2)$ for larger social welfare, while the DS scheme choses $(d,\,r_1)$ because the trip of $r_1$ contributes more sensing gain as it is longer and covers areas with low trip demands.

\subsection{Simulation study}\label{subsecSS}

\subsubsection{Key performance areas and indicators}\label{subsubsecKPA}
This section evaluates the VCG and DS matching and pricing schemes in terms of three key performance areas (KPAs) and five key performance indicators (KPIs):
\begin{itemize}
\item {\bf Level of service}: (1) {\it Matching rate}, which is the percentage of riders that are matched to drivers; (2) {\it Average wait time}, which is the averaged pick-up times $\tau_{dr}$ of matched pairs. 
\item {\bf Sensing externality}: (3) {\it Sensing utility} $\Phi$, which is given in \eqref{eqnPhi}; (4) {\it Grid coverage rate}, which is the average \% of grids covered at least once in an hour. 
\item (5) {\bf Platform revenue}, which is the total charge to the riders minus the total payment to the drivers. 
\end{itemize}

\subsubsection{Simulation setup}
The simulation period spans four hours, with a total of four 1-hr sensing intervals. Within each sensing interval, there are 18 decision epochs (each of length 3 min 20s).

The trip demands are generated using historical information on completed trips between 8:00-12:00, collected from 1 Aug to 31 Dec, 2021. Three demand scenarios are considered, with increasing proportion of remote trip (i.e. trips ending with low local trip demands), as shown in Figure \ref{figlmh}. In all three scenarios, the total number of trip requests is set constant (around 144 per hour).

At the beginning of each simulation run the taxis are randomly positioned in the network. Vacant taxis move towards areas with high order prospects (Figure \ref{figprospects}b) until they are assigned orders. All taxis travel at a constant speed of 35 km/hr. 

\begin{figure}[h!]
\centering
\includegraphics[width=\textwidth]{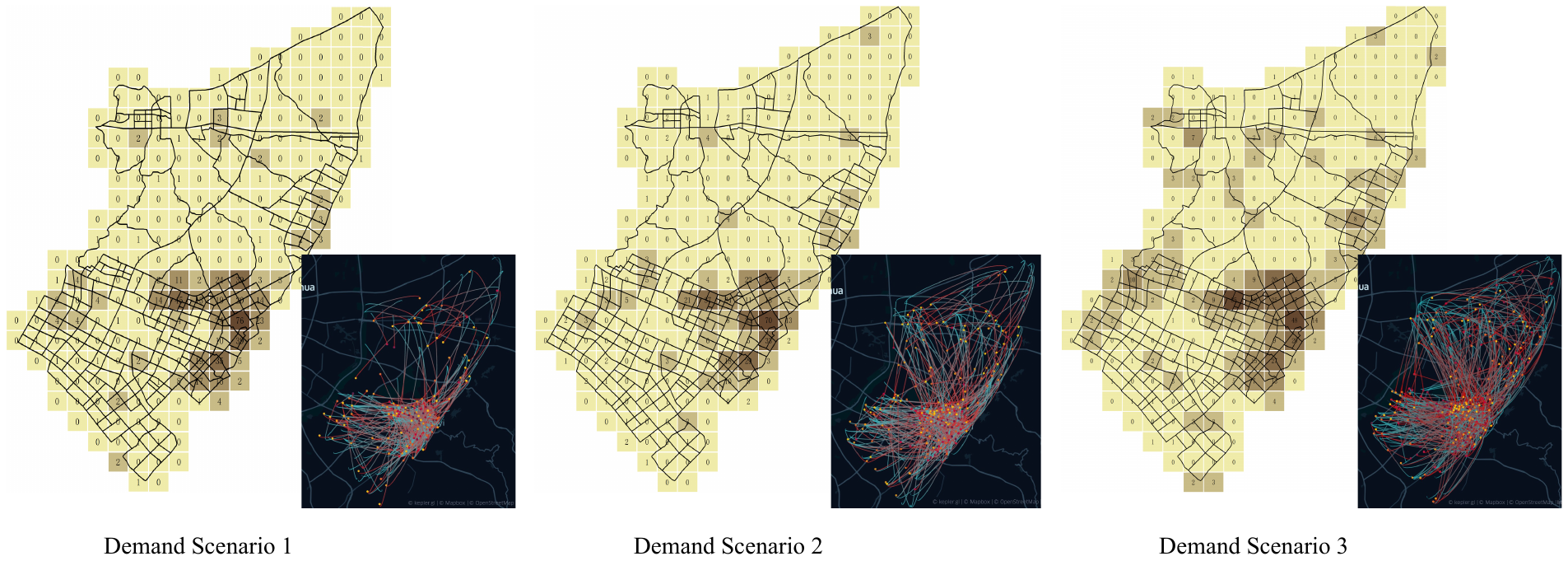}
\caption{The three demand scenarios, with increasing percentage of remote order number.}
\label{figlmh}
\end{figure}

\subsubsection{Simulation results on the KPIs}
The performances of VCG and DS schemes, in terms of the key performance areas presented in Section \ref{subsubsecKPA}, are compared in Figure \ref{figsim3cases} for the three demand scenarios, as well as fleet size varying from 20 to 60. The participants' true bid value of the extra pick-up distance, $b_d$ and $\delta_r$, are drawn from a uniform distribution $U[1,\,2]$ (CNY/km). 

\begin{figure}[H]
\centering 
\includegraphics[width=\textwidth]{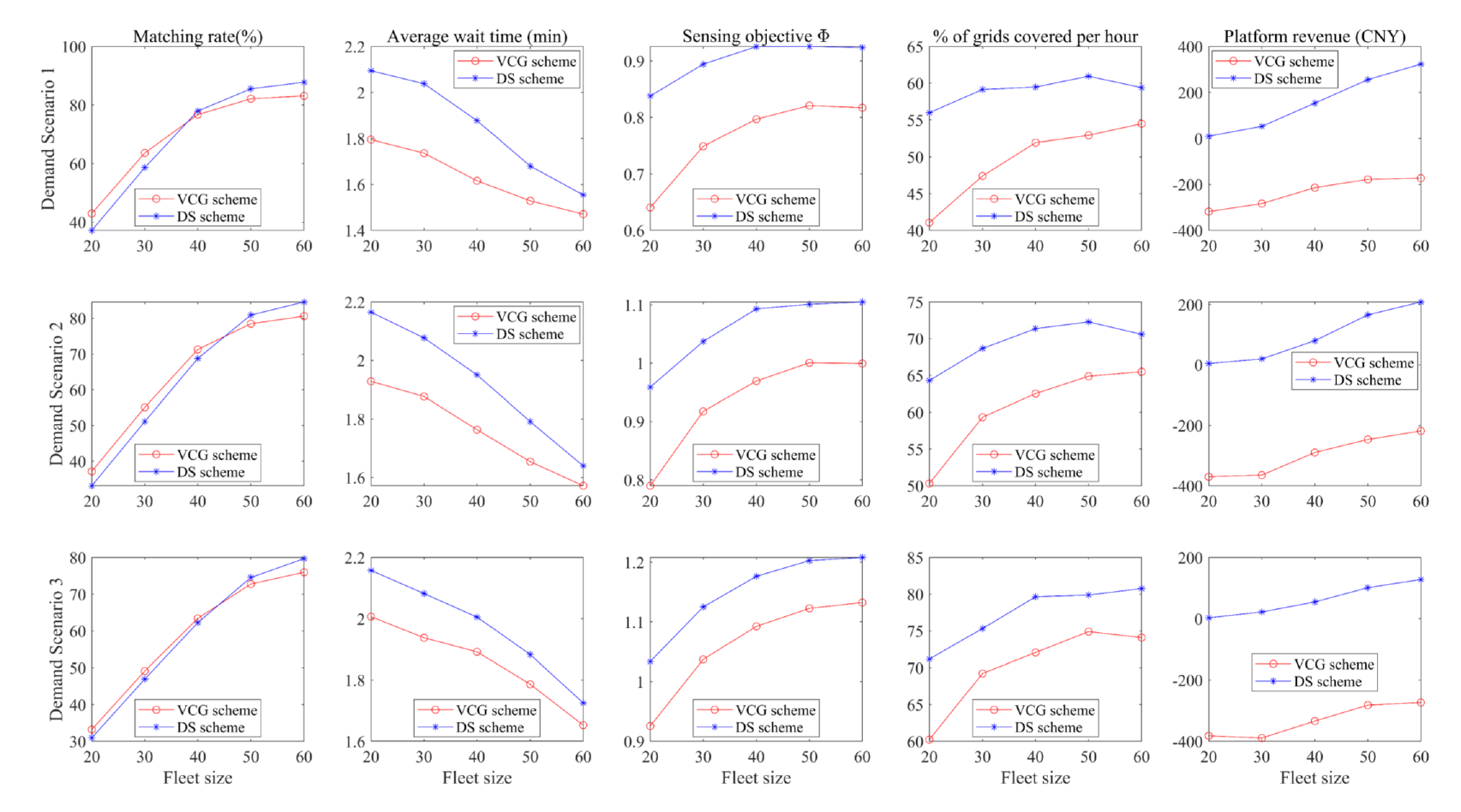}
\caption{Performance comparison of VCG and DS matching \& pricing schemes. Each row corresponds to a demand scenario.}
\label{figsim3cases}
\end{figure}

\noindent The following observations are made from Figure \ref{figsim3cases}:
\begin{enumerate}
\item The level of service (matching rate \& wait time) and sensing externality (sensing utility \& grid coverage rate) both improve as the fleet size increases. 

\item The VCG and DS schemes have similar matching rates. The former slightly outperforms the latter when the fleet size $\leq 40$, and the situation is reversed for fleet size $>40$. This is because, given abundant supply (fleet size $>40$), matches with negative social welfare and high DS externality will be pursued by the DS scheme, but abandoned by the VCG scheme.

\item The VCG scheme yields less wait time than the DS scheme, and the difference is larger for smaller fleet size (up to 0.3 min). The reason is that the VCG scheme tends to favor nearest matches as the social welfare is maximized in this way. In contrast, the DS scheme first selects riders whose requested trips maximize the sensing utility, and subsequently match them with drivers within a 2-km radius, ending up with higher wait time. 

\item In terms of sensing objective and grid coverage, the DS scheme considerably outperform the VCG scheme. The gap between the two is larger in Demand Scenario 1, suggesting that the DS scheme is suited for highly unbalanced trip distributions.

\item The DS scheme yields positive revenue while the VCG scheme has a significant deficit. As pointed out at the end of Section \ref{subsecdetail}, when binding, the lower bounds on riders' charges generate revenue for the platform. Such revenue grows with the fleet size. The revenue is also higher when the demand distribution is highly unbalanced (e.g. in Demand Scenario 1). This is because when the trips are spatially unbalanced, those remote orders could have very high sensing gain, resulting in significant bonus on the riders' part. When this happens, the binding lower bound  on their charges generate more revenue for the platform. 
\end{enumerate}

The DS scheme allows a matched pair $(d,r)$ to have negative social welfare $\sigma_{dr}<0$, in exchange for sensing gain $\zeta_{dr}$. Such a trade-off between total social welfare and sensing utility is visualized in Figure \ref{figscatter}, where the scatter points represent matched trips, and their coordinates are $(\sigma_{dr},\,\zeta_{dr})$. 

\begin{figure}[H]
\centering
\includegraphics[width=\textwidth]{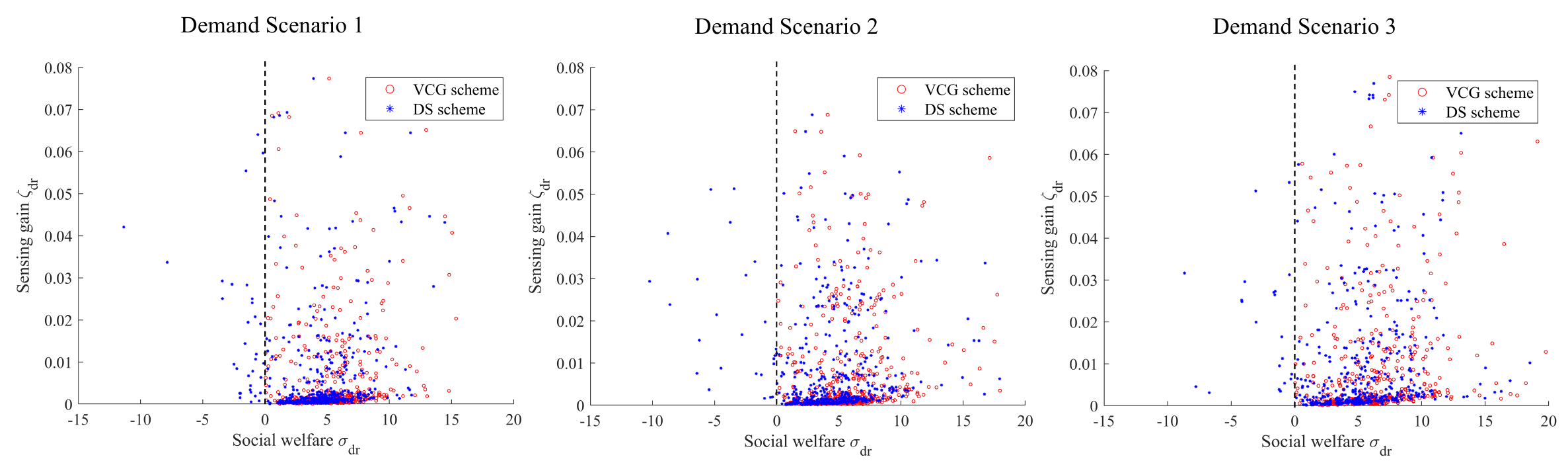}
\caption{Scatter plots of matched trips under the VCG and DS scheme(50 fleet size).}
\label{figscatter}
\end{figure}

\noindent It can be seen that: 
\begin{itemize}
\item The VCG scheme always yields matches with $\sigma_{dr}>0$; The DS scheme allows matches with negative $\sigma_{dr}$ and high $\zeta_{dr}$, while the total social welfare is still positive. 

\item The majority of the matches in either scheme have relatively low $\sigma_{dr}$ (between 0 and 10) and insignificant $\zeta_{dr}$ (below 0.005). These matches correspond to the trips starting and ending at populated areas (the mid-east region). 

\item The number of high-$\zeta_{dr}$ ($\geq 0.5$) matches  increases from Demand Scenario 1 to 3, and most of those matches have positive social welfare in Scenario 3. This means that the social welfare and sensing externality are better aligned when trip distributions are relatively balanced in space.
\end{itemize}

\subsubsection{Analysis on over-reporting}

As the DS scheme cannot rule out over-reporting, we conduct experiments assuming reasonable parameters regarding over-reporting to understand its impact on the participants' utilities and platform revenue. Specifically, individual biddings, which include over-reporting, are generated in the following way: 
$$
b_d=\bar b_d + \varepsilon,\quad \delta_r=\bar\delta_r+\varepsilon,
$$
where the true bidding values $\bar b_d, \bar\delta_r\sim U [1,\,2]$ (CNY), and the over-reporting part $\varepsilon\sim U [0,\,0.5]$ (CNY). In addition, we consider the following percentages of over-reporting participants: 0\%, 20\%, 40\% and 60\%. Table \ref{tabover} shows drivers' and riders' average utilities, which are defined in  \eqref{utilityd} and \eqref{utilityr}, as well as the platform's revenue, in various situations. These results are based on 10 independent simulation runs, each with randomly drawn $\bar b_d$, $\bar\delta_r$, and $\varepsilon$.

\begin{table}[!h]
	\centering
	\caption{Participants' average utilities and platform revenue under the DS scheme.}\label{revenueandutility20}  
	\resizebox{\columnwidth}{!}{
		\begin{tabular}{cc|ccc|ccc|ccc} 
\\ 
\hline
\multirow{2}{*}{\% of over} & \multirow{2}{*}{Demand} & \multicolumn{3}{c}{Fleet size: 20} & \multicolumn{3}{c}{Fleet size: 40}  & \multicolumn{3}{c}{Fleet size: 60} 
\\
\cline{3-11}
\multirow{2}{*}{reporting} &  \multirow{2}{*}{Scenario} & \multicolumn{2}{c}{Avg. utility} & \multirow{2}{*}{Revenue} & \multicolumn{2}{c}{Avg. utility} & \multirow{2}{*}{Revenue} & \multicolumn{2}{c}{Avg. utility} & \multirow{2}{*}{Revenue}    
\\ 
\cline{3-4}\cline{6-7}\cline{9-10}
                         &  & driver        & rider    &    & driver        & rider    &    & driver        & rider      &          
\\ 
\hline
\multirow{3}{*}{0\%} & 1 & 2.784       & 2.233         & 10.320   & 1.666      & 2.833   &157.084    & 1.148         & 3.175      & 299.689                       
\\
			       &2 & 3.207         & 2.556        & 5.157    & 2.205      & 3.110     &74.886       & 1.513          & 3.584       & 192.241    
\\
			       &3 & 3.715         & 3.022        & 4.633    & 2.752      & 3.445     &55.702       & 1.990        & 3.936        & 131.099                            
\\ 
\hline
\multirow{3}{*}{20\%}& 1 & 2.785     & 2.232         & 10.226   & 1.670      & 2.831     &155.769       & 1.153     &3.173    & 297.869 
\\
			       &2 & 3.208       & 2.555         & 5.139    & 2.209      & 3.108     & 74.287      & 1.518        &3.583     & 190.868 
\\
			       &3 & 3.717       & 3.021          & 4.585    & 2.755      &3.442     &55.481      & 1.992       & 3.934     & 130.594  
\\
\hline
\multirow{3}{*}{40\%} &1 & 2.786         & 2.231         & 10.174  &1.674      &2.829   &154.644      & 1.158         &3.171      & 296.270 
\\
			       &2 & 3.209           & 2.554         & 5.135    & 2.211      &3.107     &73.874       & 1.521           &3.580     & 189.959
\\
			       &3 & 3.718          & 3.020          & 4.564    &2.758      &3.440     &55.169     & 1.995     & 3.932    & 129.975
\\
\hline
\multirow{3}{*}{60\%} & 1 & 2.789       & 2.229          & 10.097  & 1.679     &2.827     &153.569      & 1.163          & 3.171      & 294.304
\\
			      &2 & 3.211            & 2.552          & 5.131    & 2.214     &3.105     &73.508      & 1.524        &3.580       & 188.733 
\\
			       &3 & 3.718          & 3.019          & 4.563     &2.761      &3.438     &54.905       &1.998       & 3.931        & 129.446
\\
\hline
	\end{tabular}
	\label{tabover}
	}
\end{table}

The following are observed, with each item focusing on the impact of one particular parameter:
\begin{enumerate}
\item {\bf Over-reporting \%}: As more participants over-report, the drivers' utilities increase, the riders' utilities decrease, and the platform's revenue decrease, all by very small margins. Such a trend is consistent across all three demand scenarios and fleet sizes. The reason is as follows. More over-reporting compresses the total social welfare and individuals' bonuses $\rho_d$ and $\rho_r$. According to item 3 of Section \ref{subsecdetail}, the platform’s revenue 
$$
\sum_{r\in\mathcal{R}^*} \max (0,\, \alpha h_r -(P_r-\rho_r))
$$
declines because of reduced $\rho_r$'s. As for the drivers, their average utility increases as a few drivers with zero DS contribution ($\lambda_d=0$) ended up gaining positive utilities because of over-reporting. And such gains on the drivers' part are paid by the riders, resulting in a decline in their average utility.

\item {\bf Trip distribution}: Regardless of fleet size or over-reporting \%, the participants' utilities all increase as the trip distribution becomes more balanced (from Demand Scenarios 1 to 3). This is due to the decrease of platform revenue, for the same reason explained in item 5 following Figure \ref{figsim3cases}.

\item {\bf Fleet size}: As the fleet size increases, the supply grows, leading to more competition among the drivers reducing their average utility; on the other hand, the rider's average utility increases because higher supply helps realize the requested trips' sensing potentials. In fact, $\lambda_r$'s are much greater than $\lambda_d$'s in case of high supply, because the non-zero sensing gains offered by requested trips general non-zero shares $\lambda_r$ for the riders, while many drivers have $\lambda_b=0$.
\end{enumerate}

\section{Conclusions}\label{secCon} 

This work explores the drive-by sensing (DS) potential of taxi fleets via e-hailing services. The primary shortcoming of taxi-based DS, which is spatially unbalanced distributions of vehicles (sensors), is addressed within an integrated matching and pricing scheme by prioritizing riders whose trips generate high DS gains, and rewarding drivers who deliver those gains. The proposed scheme is recapped below:
\begin{itemize}
\item The matching submodule, based on a sensing maximization principle, selects high-sensing-contribution trip requests, matched with near-by available drivers. As such a matching scheme is strongly oriented towards riders, which could lead to inefficiencies at the pick-up phase, a double auction mechanism is designed to manage the level of service. 

\item Building on the drivers' and riders' valuations of a potential match, which include both public and private information as well as opportunity cost, the pricing submodule allocates social welfare to participants according to their contribution to the sensing objective obtained from the matching phase. 
\end{itemize}

The proposed matching and pricing scheme is characterized as allocative efficient (sensing-oriented), individually rational and budget balancing (by allocating social welfare as bonuses), as well as group incentive compatible. The last property ensures that the cohort will always end up with the same total utility regardless of the mis-reporting behavior of its members.

To further confirm and quantify the said properties and benefits of the proposed framework, this paper conducts extensive simulation studies based on a real-world drive-by air quality sensing scenario. It is found that, 
\begin{enumerate}
\item compared with the VCG scheme as a benchmark, the proposed scheme not only excels in sensing efficacy, but also generates revenue for the platform, at very minor cost to the level of service (matching rate, wait time).

\item the relative performance also depends on a few factors, including fleet size, demand distribution, and over-reporting percentage. 

\item in a nutshell, the proposed scheme effectively trades social welfare for fulfilling high-sensing-gain trips, compressing user surplus for sensing externality; such a mechanism is quite effective in tested scenarios and robust against mis-reporting. 
\end{enumerate}

The practical significance of the proposed scheme is its operational feasibility, by relying on publicly available information, tolerating mis-reports, and offering a financially sustainable mode for the e-hailing platform. Future research will focus on: (1) more elaborate and empirical study on the concept of opportunity cost; (2) consideration of uncertain and heterogenous user behavior;  and (3) a holistic business model that involves users of drive-by sensing data.

\section*{Acknowledgement}
This work is partly supported by the National Natural Science Foundation of China through grant 72071163.

\end{document}